\providecommand{\U}[1]{\protect \rule{.1in}{.1in}}
\newtheorem{theorem}{Theorem}
\newtheorem{corollary}{Corollary}
\newtheorem{example}{Example}
\newtheorem{lemma}{Lemma}
\newtheorem{remark}{Remark}
\newenvironment{proof}[1][Proof]{\noindent \textbf{#1.} }{\  \rule{0.5em}{0.5em}}
\begin{document}

\title{Positive Definite Solutions of the Nonlinear Matrix Equation $X+A^{\mathrm{H}%
}\overline{X}^{-1}A=I$}
\date{}
\author{Bin Zhou\thanks{Center for Control Theory and Guidance Technology, Harbin
Institute of Technology, P.O. Box 416, Harbin 150001, P. R. China.
E-mail: \texttt{binzhoulee@163.com, binzhou@hit.edu.cn.}}
\thanks{Corresponding author\texttt{.}}\qquad Guang-Bin
Cai\thanks{Unit 302, Department of Automation, Xi'an Research
Institute of High-Tech, Hongqing Town, Xi'an, 710025, Shaanxi, P. R.
China. E-mail: \texttt{cgb0712@163.com.}} \qquad James
Lam\thanks{James Lam is with Department of Mechanical Engineering,
University of Hong Kong, Hong Kong. E-mail:
\texttt{james.lam@hku.hk}.}} \maketitle

\begin{abstract}
This paper is concerned with the positive definite solutions to the
matrix equation $X+A^{\mathrm{H}}\overline{X}^{-1}A=I$ where $X$ is
the unknown and $A$ is a given complex matrix. By introducing and
studying a matrix operator on complex matrices, it is shown that the
existence of positive definite solutions of this class of nonlinear
matrix equations is equivalent to the existence of positive definite
solutions of the nonlinear matrix equation
$W+B^{\mathrm{T}}W^{-1}B=I$ which has been extensively studied in
the literature, where $B$ is a real matrix and is uniquely
determined by $A.$ It is also shown that if the considered nonlinear
matrix equation has a positive definite solution, then it has the
maximal and minimal solutions. Bounds of the positive definite
solutions are also established in terms of matrix $A$. Finally some
sufficient conditions and necessary conditions for the existence of
positive definite solutions of the equations are also proposed.

\noindent \textbf{Keywords:} Bound of solutions; Complex matrix;
Nonlinear matrix equation; Positive definite solutions.

\end{abstract}
\section{Introduction}

Various kinds of matrix equations have received much attention in
the literature (see, for example, \cite{bhrh87ctmt},
\cite{ccl12amc}, \cite{chu89laa}, \cite{dh08amc}, \cite{dh10laa},
\cite{dh11amm}, \cite{dc05tac}, \cite{dld08amc}, \cite{fc05scl},
\cite{e93laa}, \cite{lw10ijcm}, \cite{lw10amc}, \cite{lhz09cma},
\cite{wlwc08cma}, \cite{zdl09scl}, \cite{zld09aml}, \cite{zd08scl},
and the references therein). Especially, the problem of finding
fixed points of the nonlinear matrix equation $X+A^{*}X^{-1}A=Q$
where $A$ and $Q>0$ are given and $X$ is unknown, has been
extensively studied in the last two decades. The interest of
studying such problem mainly relies on its applications in many
fields such as the analysis of ladder networks \cite{amt83cssp},
dynamic programming \cite{pw75rmp}, control theory \cite{gk85siam},
stochastic filtering \cite{akkw69ams} and statistics
\cite{ouellette81laa} (see \cite{jr90laa}, \cite{err93laa} and
\cite{e93laa} for detailed introduction). Some generalized forms of
the nonlinear matrix equation $X+A^{*}X^{-1}A=Q$ have received much
attention in recent years (see, for example, \cite{cc10amc},
\cite{dll11amc}, \cite{jw09amc}, and \cite{lyd10amc}).

Among the existing publications in the literature, two kinds of
results can be found. The first kind of results concentrate on
providing analytical conditions on the existence of positive
definite solutions and their corresponding properties. For example,
the shorted operator theory was applied in \cite{jr90laa} to study
the existence of a positive definite solution; necessary and
sufficient conditions in terms of symmetric factorizations of some
rational matrix-valued function were derived in \cite{err93laa} for
the existence of positive definite solutions; and some necessary and
sufficient conditions were also derived \cite{zx96laa} for the same
problem in terms of some factorizations of the coefficient matrices.
The other kind of results are mainly concerned with the numerical
solutions of this class of nonlinear matrix equations. Basically,
this can be accomplished via iterations including inversion-involved
iterations \cite{e93laa}, \cite{gl99mc} and inversion-free
iterations \cite{elsayed03anziam}, \cite{ihu04mc}, \cite{mr10laa},
\cite{zhan95siam}.

In the present paper, we consider a variation of this well-studied
nonlinear equation. We study the nonlinear matrix equation
$X+A^{*}\overline {X}^{-1}A=Q$, which, as we will show in this
paper, has totally different solutions from the solutions of
$X+A^{*}X^{-1}A=Q.$ In particular, we are interested in the
existence of positive definite solutions of such kind of nonlinear
matrix equations. Via some specific representations of complex
matrices, we are able to transform the equation $X+A^{*}\overline
{X}^{-1}A=Q$ into the equation $W+B^{*}W^{-1}B=P$, where $B$ and $P$
are determined by $A$ and $Q$, respectively. This allows us to study
the original nonlinear matrix equations with the help of the
existing results on the equation $X+A^{*}X^{-1}A=Q.$ Other topics of
this paper include the estimate of the bounds on the solutions,
sufficient conditions, and necessary conditions on guaranteeing a
positive definite solution.

The rest of this paper is organized as follows. The problem
formulation and some preliminary results to be used are given in
Section \ref{sec2}. In Section \ref{sec3}, we present necessary and
sufficient conditions for the existence of a positive definite
solution of the considered nonlinear matrix equations. Both upper
bounds and lower bounds of the solutions will be established in
Section \ref{sec4}, while the necessary conditions and sufficient
conditions guaranteeing a positive definite solution are given in
Section \ref{sec5}. We will draw the conclusions of this paper in
Section \ref{sec6}.

\textbf{Notation}: In this paper, for a matrix $A$, we use $A^{\mathrm{T}%
},A^{*},\overline{A},$ $\lambda \left(  A\right)  ,\det \left(
A\right) $ $,\left \Vert A\right \Vert $ and $\rho \left(  A\right)
$ to denote respectively the transpose, the conjugated transpose,
the conjugate, the spectrum, the determinant, the $2$-norm, and the
spectral radius of $A.$ Moreover, $\omega \left(  A\right)  =\max
\left \{  \left \vert z\right \vert :z=x^{*}Ax,\left \Vert x\right
\Vert =1\right \} $ is the numerical radius of $A.$ Finally, the
symbol $P>0$ means that $P$ is positive definite, $I_{n}$ denotes an
$n\times n$ identity matrix, $\mathbf{0}$ denotes a zero matrix with
appropriate dimensions, and $\mathrm{j}=\sqrt{-1}$.

\section{\label{sec2}Problem Formulation and Preliminary Results}

We consider the following nonlinear matrix equation%
\begin{equation}
X+A^{*}\overline{X}^{-1}A=Q \label{eq1}%
\end{equation}
where $Q\in \mathbf{C}^{n\times n}$ is a given positive definite
matrix, $A\in \mathbf{C}^{n\times n}$ is a given complex matrix, and
$X\in \mathbf{C}^{n\times n}$ is the unknown. In this paper, we are
interested in the existence of positive definite solutions of this
class of nonlinear matrix equations.

\begin{remark}
Similar to \cite{zld11laa}, we can also consider positive definite
solutions
of matrix equation%
\begin{equation}
X+A^{*}X^{-\mathrm{T}}A=Q. \label{eq50}%
\end{equation}
However, the positive definite solutions of (\ref{eq50}) coincide
with the
positive definite solutions of equation (\ref{eq1}) since $X^{-\mathrm{T}%
}=\overline{X}^{-*}=\overline{X}^{-1}.$
\end{remark}

Via a simple manipulation we can show the following result.

\begin{lemma}
\label{lm6}Let $Q$ be a positive definite matrix. Then $X$ is a
solution of (\ref{eq1}) if and only if
$Y=Q^{-\frac{1}{2}}XQ^{-\frac{1}{2}}$ is a solution
of the following nonlinear matrix equation%
\[
I_{n}=Y+A_{Q}^{*}\overline{Y}^{-1}A_{Q}, \quad A_{Q}%
=\overline{Q}^{-\frac{1}{2}}AQ^{-\frac{1}{2}}.
\]

\end{lemma}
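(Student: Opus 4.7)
The plan is to do a direct congruence substitution. Set $X = Q^{1/2} Y Q^{1/2}$, so that $Y = Q^{-1/2} X Q^{-1/2}$, and I would substitute this change of variables into \eqref{eq1} and then scale by $Q^{-1/2}$ on both sides.

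First I would compute $\overline{X}$ from the substitution. Since $Q>0$, the Hermitian square root $Q^{1/2}$ satisfies $\overline{Q^{1/2}} = \overline{Q}^{1/2}$, so $\overline{X} = \overline{Q}^{1/2}\,\overline{Y}\,\overline{Q}^{1/2}$ and hence $\overline{X}^{-1} = \overline{Q}^{-1/2}\,\overline{Y}^{-1}\,\overline{Q}^{-1/2}$. Plugging these into \eqref{eq1} yields
\[
Q^{1/2} Y Q^{1/2} + A^{*}\,\overline{Q}^{-1/2}\,\overline{Y}^{-1}\,\overline{Q}^{-1/2}\, A = Q .
\]
Multiplying on the left and right by $Q^{-1/2}$ (which is legitimate because $Q>0$ means $Q^{-1/2}$ is invertible) produces
\[
Y + \bigl(Q^{-1/2} A^{*} \overline{Q}^{-1/2}\bigr)\,\overline{Y}^{-1}\,\bigl(\overline{Q}^{-1/2} A Q^{-1/2}\bigr) = I_n .
\]

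Next I would identify the outer factors with $A_Q$ and $A_Q^{*}$. By the Hermitian symmetry $(Q^{-1/2})^{*} = Q^{-1/2}$ and $(\overline{Q}^{-1/2})^{*} = \overline{Q}^{-1/2}$, one has $A_Q^{*} = (\overline{Q}^{-1/2} A Q^{-1/2})^{*} = Q^{-1/2} A^{*} \overline{Q}^{-1/2}$, which is exactly the left factor. This gives $Y + A_Q^{*}\,\overline{Y}^{-1}\,A_Q = I_n$, as desired. The converse direction is obtained by running the same manipulation in reverse: multiplying the $Y$-equation by $Q^{1/2}$ on both sides and using $X = Q^{1/2} Y Q^{1/2}$, $\overline{X}^{-1} = \overline{Q}^{-1/2}\,\overline{Y}^{-1}\,\overline{Q}^{-1/2}$ to recover \eqref{eq1}.

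The argument is essentially a one-line congruence; the only subtlety worth flagging is the interaction between conjugation and the square root, namely $\overline{Q^{1/2}} = \overline{Q}^{1/2}$, which holds because $Q^{1/2}$ is the unique positive definite Hermitian square root of $Q$ and therefore $\overline{Q^{1/2}}$ is a positive definite Hermitian matrix whose square is $\overline{Q}$. Apart from that, the proof is a straightforward verification and poses no real obstacle.
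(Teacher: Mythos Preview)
Your proposal is correct and is precisely the ``simple manipulation'' the paper alludes to; the paper does not actually spell out a proof of this lemma, and your congruence argument (including the justification that $\overline{Q^{1/2}}=\overline{Q}^{1/2}$) is exactly what is needed.
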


Therefore, without loss of generality, we assume hereafter that
$Q=I_{n}$ in (\ref{eq1})$.$ We point out that matrix equation
(\ref{eq1}) has solutions that are totally different from solutions
of the following nonlinear matrix
equation%
\begin{equation}
X+A^{*}X^{-1}A=I_{n}. \label{eq100}%
\end{equation}
See the following example for illustration.

\begin{example}
\label{example1}Consider a nonlinear matrix equation in the form of
(\ref{eq1}) with $Q=I_{2}$ and%
\[
A=\left[
\begin{array}
[c]{cc}%
\frac{1}{4}+\frac{1}{4}\mathrm{j} & \frac{1}{4}\mathrm{j}\\
-\frac{1}{4}\mathrm{j} & \frac{1}{4}-\frac{1}{4}\mathrm{j}%
\end{array}
\right]  .
\]
Then according to the results we will give later, we find the
maximal positive
definite solution of this equation as%
\[
X_{+}=\left[
\begin{array}
[c]{cc}%
\frac{1}{2}+\frac{1}{8}\sqrt{6} & -\frac{1}{8}-\frac{1}{8}\mathrm{j}\\
-\frac{1}{8}+\frac{1}{8}\mathrm{j} & \frac{1}{2}+\frac{1}{8}\sqrt{6}%
\end{array}
\right]  .
\]
However, according to the results in \cite{err93laa}, the maximal
positive
definite solution of equation (\ref{eq100}) can be computed as%
\[
X_{+}^{\prime}=\left[
\begin{array}
[c]{cc}%
\frac{1}{8}\sqrt{2}+\frac{1}{2} & -\frac{1}{4}-\frac{1}{8}\sqrt{2}\mathrm{j}\\
-\frac{1}{4}+\frac{1}{8}\sqrt{2}\mathrm{j} & \frac{1}{8}\sqrt{2}+\frac{1}{2}%
\end{array}
\right]  .
\]
It is clearly that $X_{+}\neq X_{+}^{\prime}.$
\end{example}

For a complex matrix $A=A_{1}+A_{2}\mathrm{j}\in \mathbf{C}^{n\times
m}$ where $A_{1},A_{2}\in \mathbf{R}^{n\times m},$ we denote the
operators $\left( \cdot \right)  ^{\heartsuit}$ and $\left(  \cdot
\right)  ^{\lozenge}$ as
\[
A^{\heartsuit}=\left[
\begin{array}
[c]{cc}%
A_{1} & -A_{2}\\
A_{2} & A_{1}%
\end{array}
\right]  ,\quad A^{\lozenge}=\left[
\begin{array}
[c]{cc}%
A_{2} & A_{1}\\
A_{1} & -A_{2}%
\end{array}
\right]  .
\]
It follows that both $A^{\heartsuit}$ and $A^{\lozenge}$ are real
matrices.
For further use, we define two unitary matrices $E_{n}$ and $P_{n}$ as%

\begin{equation}
E_{n}=\left[
\begin{array}
[c]{cc}%
0 & I_{n}\\
I_{n} & 0
\end{array}
\right]  ,\quad P_{n}=\frac{\sqrt{2}}{2}\left[
\begin{array}
[c]{cc}%
\mathrm{j}I_{n} & I_{n}\\
I_{n} & \mathrm{j}I_{n}%
\end{array}
\right]  . \label{eq20}%
\end{equation}
Some basic properties of these matrices are collected as the
following lemma whose proof is provided in Appendix B.

\begin{lemma}
\label{lm0}Let $A\in \mathbf{C}^{n\times m}$ and $B\in
\mathbf{C}^{m\times p}$ be two given complex matrices.

\begin{enumerate}
\item The following equalities are true.%
\[%
\begin{array}
[c]{lll}%
\left(  AB\right)  ^{\heartsuit}=A^{\heartsuit}B^{\heartsuit}, &
\left( A^{-1}\right)  ^{\heartsuit}=\left(  A^{\heartsuit}\right)
^{-1}, & \left( A^{\mathrm{T}}\right)  ^{\heartsuit}=E_{m}\left(
A^{\heartsuit}\right)
^{\mathrm{T}}E_{n},\\
\left(  A^{*}\right)  ^{\heartsuit}=\left( A^{\heartsuit}\right)
^{\mathrm{T}}, & \left(  \overline{A}\right)  ^{\heartsuit}=E_{n}%
A^{\heartsuit}E_{m}, & A^{\diamondsuit}=E_{n}A^{\heartsuit}.
\end{array}
\]

\item Let $P_{n}$ be defined in (\ref{eq20}). Then
\begin{equation}
A^{\heartsuit}=P_{n}\left[
\begin{array}
[c]{cc}%
A & 0\\
0 & \overline{A}%
\end{array}
\right]  P_{m}^{*}. \label{teq65}%
\end{equation}
Consequently, $A\geq \left(  >\right)  0$ if and only if $A^{\heartsuit}%
\geq \left(  >\right)  0.$

\item $A\in \mathbf{C}^{n\times n}$ is a normal (unitary) matrix if and only if
$A^{\heartsuit}$ is a real normal (unitary) matrix.

\item For any $A\in \mathbf{C}^{n\times n},$ there holds $\rho \left(  A\right)
=\rho \left(  A^{\heartsuit}\right)  $ and $\rho \left(
A^{\lozenge}\right) =\rho^{\frac{1}{2}}\left(  A\overline{A}\right)
$.

\item The real matrix $A^{\lozenge}$ is normal if and only if $A$ is
con-normal, namely, $A^{*}A=\overline{AA^{*}}.$

\item For any $A\in \mathbf{C}^{n\times m},$ there holds $\left \Vert
A^{\lozenge}\right \Vert =\left \Vert A^{\heartsuit}\right \Vert
=\left \Vert A\right \Vert $

\item If $A\geq0$, then $\left(  A^{\heartsuit}\right)  ^{\frac{1}{2}}=\left(
A^{\frac{1}{2}}\right)  ^{\heartsuit}.$
\end{enumerate}
\end{lemma}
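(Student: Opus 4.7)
The plan is to prove all seven parts by exploiting the block structure of the $\heartsuit$ and $\lozenge$ operators, with part (2) doing most of the structural work and parts (1), (4) used as toolkits for the rest. Write $A = A_1 + A_2\mathrm{j}$ and $B = B_1 + B_2\mathrm{j}$ with real parts. First I would dispatch part (1) by a sequence of direct $2\times 2$ block calculations: expand $AB$ into real/imaginary parts, then verify block-by-block that the $\heartsuit$-image of the product equals the product of the $\heartsuit$-images; the remaining identities for $A^{-1}$, $A^{\mathrm T}$, $A^{*}$, $\overline{A}$ and $A^{\lozenge}$ are each single block manipulations of the same flavor, the conjugate and transpose cases essentially amounting to flipping signs on the $A_2$ block or swapping block rows/columns (which is what the matrices $E_n$ encode).

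The substantive step is part (2). I would verify the identity \eqref{teq65} by direct multiplication: using $P_n^{*}=\frac{\sqrt{2}}{2}\left[\begin{smallmatrix}-\mathrm{j}I&I\\I&-\mathrm{j}I\end{smallmatrix}\right]$, compute $P_n\,\mathrm{diag}(A,\overline{A})\,P_m^{*}$ and match the four blocks against $A^\heartsuit$. Since $P_n$ is unitary (a quick check of $P_nP_n^{*}=I_{2n}$), this is a unitary similarity, so $A^\heartsuit$ and $\mathrm{diag}(A,\overline{A})$ have the same inertia and spectrum; in particular $A^\heartsuit\ge(>)\,0$ iff both $A$ and $\overline{A}$ are, which is equivalent to $A\ge(>)\,0$. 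Parts (3) and (4)'s first claim fall out immediately: $A$ is normal (unitary) iff $\mathrm{diag}(A,\overline{A})$ is, iff $A^\heartsuit$ is, and $\rho(A^\heartsuit)=\max\{\rho(A),\rho(\overline{A})\}=\rho(A)$.

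For the $\lozenge$ part of (4), the key trick is to square: using $A^\lozenge=E_nA^\heartsuit$ from (1) and $E_nA^\heartsuit E_n=(\overline{A})^\heartsuit$ (again from (1)), one obtains $(A^\lozenge)^2=(\overline{A}A)^\heartsuit$, so $\rho(A^\lozenge)^2=\rho((\overline{A}A)^\heartsuit)=\rho(\overline{A}A)=\rho(A\overline{A})$, giving $\rho(A^\lozenge)=\rho^{1/2}(A\overline{A})$. Part (5) will follow by computing $(A^\lozenge)^{\mathrm T}A^\lozenge=(A^\heartsuit)^{\mathrm T}A^\heartsuit=(A^{*}A)^\heartsuit$ and $A^\lozenge(A^\lozenge)^{\mathrm T}=E_n(AA^{*})^\heartsuit E_n=(\overline{AA^{*}})^\heartsuit$, so normality of $A^\lozenge$ is equivalent to $A^{*}A=\overline{AA^{*}}$ by the injectivity built into the $\heartsuit$-representation.

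The remaining parts are short. For (6), $\|A^\heartsuit\|^2=\rho((A^\heartsuit)^{\mathrm T}A^\heartsuit)=\rho((A^{*}A)^\heartsuit)=\rho(A^{*}A)=\|A\|^2$, and $\|A^\lozenge\|=\|E_nA^\heartsuit\|=\|A^\heartsuit\|$ since $E_n$ is orthogonal. For (7), when $A\ge0$ the matrix $(A^{1/2})^\heartsuit$ is positive semidefinite by part (2), and squaring it gives $((A^{1/2})^\heartsuit)^2=(A^{1/2}A^{1/2})^\heartsuit=A^\heartsuit$ by part (1); uniqueness of the positive semidefinite square root yields the identity. The only mild obstacle I anticipate is bookkeeping in the block calculations of part (1) and in verifying \eqref{teq65}, where keeping track of where the factors of $\mathrm{j}$, the conjugations, and the block-row swaps land must be done carefully; beyond that, the lemma is an orchestration of routine identities around the similarity \eqref{teq65}.
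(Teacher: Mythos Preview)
Your proposal is correct and follows essentially the same approach as the paper: part (1) by direct block verification, part (2) via the unitary similarity with $P_n$, and the remaining parts as consequences of (1) and (2). There are a few minor variations in execution worth noting. For the $\lozenge$ half of (4), your route through $(A^\lozenge)^2=E_nA^\heartsuit E_nA^\heartsuit=(\overline{A})^\heartsuit A^\heartsuit=(\overline{A}A)^\heartsuit$ is a bit slicker than the paper's, which instead conjugates by $P_n$ explicitly to reach the antidiagonal block matrix $\left[\begin{smallmatrix}0&\overline{A}\\A&0\end{smallmatrix}\right]$ before squaring. For (7), you invoke uniqueness of the positive semidefinite square root after checking that $(A^{1/2})^\heartsuit$ squares to $A^\heartsuit$, whereas the paper unitarily diagonalizes $A=UDU^{*}$ and pushes the $\heartsuit$-operator through using orthogonality of $U^\heartsuit$; your argument is shorter and avoids the explicit diagonalization. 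For (3) and (6) the paper leans on the algebraic identities from (1) (e.g.\ $(A^\heartsuit)^{\mathrm T}A^\heartsuit=(A^{*}A)^\heartsuit$) rather than directly on the block-diagonal form via (2), but the two routes are interchangeable. None of these differences is substantive.
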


Two matrices $A,B\in \mathbf{C}^{n\times n}$ are said to be
con-similar if there exists a nonsingular matrix $S\in
\mathbf{C}^{n\times n}$ such that $S^{-1}B\overline{S}=A.$ The
following lemma is borrowed from \cite{bhrh87ctmt}.

\begin{lemma}
\label{lm5}Let $J_{k}\left(  \lambda \right)  $ denote a $k\times k$
Jordan matrix whose diagonal elements are $\lambda.$ Then any matrix
$A\in \mathbf{C}^{n\times n}$ is con-similar to a direct sum of
blocks of the form
$J_{k}\left(  \lambda \right)  $ where $\lambda \geq0$ or%
\[
\left[
\begin{array}
[c]{cc}%
0 & I_{k}\\
J_{k}\left(  \lambda \right)  & 0
\end{array}
\right]
\]
where $\lambda<0$ or $\\mathrm{Im}\left(  \lambda \right) \neq0.$
\end{lemma}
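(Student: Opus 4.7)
The plan is to exploit the fundamental con-similarity invariant $A\overline{A}$. If $A=S^{-1}B\overline{S}$, a direct computation yields $A\overline{A}=S^{-1}B\overline{B}S$, so $A\overline{A}$ and $B\overline{B}$ are ordinarily similar. Hence the ordinary Jordan structure of $A\overline{A}$ is an invariant of the con-similarity class of $A$, and any complete canonical form must refine this by sign/phase data on each eigenvalue. As a sanity check, computing $A\overline{A}$ for each proposed block gives $J_k(\lambda)^2$ (a single Jordan block when $\lambda>0$, splitting into smaller pieces when $\lambda=0$) for the first type, and $\mathrm{diag}(J_k(\overline{\lambda}),J_k(\lambda))$ for the pair block, confirming that nonnegative-real eigenvalues $\mu=\lambda^2$ of $A\overline{A}$ should arise from single blocks while negative or non-real eigenvalues should arise from paired blocks.

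The second step is to decompose $\mathbf{C}^n$ into generalized eigenspaces $V_\mu$ of $A\overline{A}$ and track how they interact with complex conjugation. Since $\overline{A\overline{A}}\cdot\overline{A}=\overline{A}\cdot A\overline{A}$, the antilinear map $x\mapsto\overline{A}x$ sends $V_\mu$ into $V_{\overline{\mu}}$, so $\overline{V_\mu}=V_{\overline{\mu}}$. For real $\mu$ the subspace $V_\mu$ is con-$A$-invariant in the sense that $A\overline{V_\mu}\subseteq V_\mu$, while for non-real $\mu$ the pairing $V_\mu\oplus V_{\overline{\mu}}$ is con-$A$-invariant. Thus it suffices to construct the canonical basis on each such summand separately.

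Within an invariant summand corresponding to a single Jordan block of $A\overline{A}$ at a nonzero eigenvalue $\mu$, I would proceed as follows. If $\mu>0$, refine a Jordan basis of $V_\mu$ for $A\overline{A}$ into a basis on which $A$ has the form $J_k(\sqrt{\mu})$ with $\sqrt{\mu}\geq0$; a diagonal con-similarity by $\mathrm{diag}(c_1,\ldots,c_k)$ multiplies each diagonal entry of $A$ by the unit-modulus factor $\overline{c_i}/c_i$, allowing any nonzero complex eigenvalue to be rotated to its absolute value. If $\mu<0$ or $\mathrm{Im}(\mu)\neq0$, pick any root $\lambda$ with $\lambda^2=\mu$ together with a Jordan basis $v_1,\ldots,v_k$ of $V_\mu$; then the ordered basis $v_1,\ldots,v_k,\lambda^{-1}\overline{A}v_1,\ldots,\lambda^{-1}\overline{A}v_k$ of $V_\mu\oplus V_{\overline{\mu}}$ produces exactly the pair-block matrix. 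The case $\mu=0$ requires a separate treatment because $A$ itself is nilpotent on $V_0$ while the Jordan structure of $A\overline{A}$ there is strictly coarser than that of $A$.

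The main obstacle is this last nilpotent case together with the off-diagonal bookkeeping in the phase-correction step: one must verify that the diagonal con-similarity designed to rotate the eigenvalues into $[0,\infty)$ does not disturb the superdiagonal Jordan pattern, and one must argue separately for the zero eigenvalue that the $A$-Jordan structure on $V_0$ is itself a complete con-similarity invariant. Since the lemma is classical and cited from \cite{bhrh87ctmt}, the detailed bookkeeping is deferred to that reference.
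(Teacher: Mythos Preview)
The paper does not prove this lemma at all; it is simply quoted with the preface ``The following lemma is borrowed from \cite{bhrh87ctmt}.'' Your proposal likewise ends by deferring the detailed bookkeeping to that same reference, so in that respect the two agree completely.

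Your added sketch follows the standard route via the similarity invariant $A\overline{A}$, which is indeed how the result is established in the cited source. Two small imprecisions are worth flagging. First, the map $x\mapsto\overline{A}x$ is \emph{linear}, not antilinear; the antilinear operator you actually want is $T\colon x\mapsto A\overline{x}$, for which $T^{2}=A\overline{A}$ and the inclusion $T(V_{\mu})\subseteq V_{\overline{\mu}}$ holds exactly as you claim. Second, in the pair-block case the parameter $\lambda$ appearing in $J_{k}(\lambda)$ is the eigenvalue $\mu$ of $A\overline{A}$ itself, not a square root of it: your own computation gives $A\overline{A}=\mathrm{diag}\bigl(J_{k}(\overline{\lambda}),J_{k}(\lambda)\bigr)$ for that block, so the instruction ``pick any root $\lambda$ with $\lambda^{2}=\mu$'' should read $\lambda=\mu$. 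These are bookkeeping slips rather than gaps in the overall strategy.
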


The set of complex numbers $\lambda$ appearing in the $J_{k}\left(
\lambda \right)  $ in the blocks of the the canonical form defined
in Lemma \ref{lm5} of a matrix $A\ $will be called the con-spectrum
of $A$ and denoted by $\mathrm{co}\lambda \left(  A\right)  $
\cite{bhrh87ctmt}. Moreover, the con-spectrum-radius of $A$ will be
denoted by
\[
\mathrm{co}\rho \left(  A\right)  =\max \{ \left \vert \lambda
\right \vert :\lambda \in \mathrm{co}\lambda \left(  A\right)  \}.
\]
A property of the\ con-spectrum-radius is given in the following
lemma whose proof will be presented in Appendix C.

\begin{lemma}
\label{lm2}Let $A\in \mathbf{C}^{n\times n}$ be a given matrix. Then
$\mathrm{co}\rho \left(  A\right)  \leq \left(  \geq \right)
1\Leftrightarrow \rho \left(  A\overline{A}\right)  \leq \left(
\geq \right)  1.$
\end{lemma}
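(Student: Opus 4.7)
The plan is to reduce everything to the canonical form provided by Lemma \ref{lm5}. First I would observe that if $A$ is con-similar to $J$, i.e. $A = S^{-1} J \overline{S}$ for some nonsingular $S$, then a direct computation gives
\[
A\overline{A} = S^{-1} J \overline{S} \cdot \overline{S}^{-1} \overline{J} S = S^{-1} (J\overline{J}) S,
\]
so $A\overline{A}$ is similar to $J\overline{J}$ and in particular $\rho(A\overline{A}) = \rho(J\overline{J})$. By Lemma \ref{lm5} I may therefore assume $A$ itself is block-diagonal with blocks of the two canonical types, and it suffices to analyze $B\overline{B}$ block-by-block.

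For a block of the first type, $B = J_k(\lambda)$ with $\lambda \geq 0$, the matrix $\overline{B} = B$, so $B\overline{B} = J_k(\lambda)^2$ has spectral radius $\lambda^2 = |\lambda|^2$. For a block of the second type,
\[
B = \left[\begin{array}{cc} 0 & I_k \\ J_k(\lambda) & 0 \end{array}\right], \qquad \overline{B} = \left[\begin{array}{cc} 0 & I_k \\ J_k(\overline{\lambda}) & 0 \end{array}\right],
\]
a straightforward block multiplication yields
\[
B\overline{B} = \left[\begin{array}{cc} J_k(\overline{\lambda}) & 0 \\ 0 & J_k(\lambda) \end{array}\right],
\]
whose spectrum is $\{\lambda,\overline{\lambda}\}$ and whose spectral radius is $|\lambda|$. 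Taking the maximum over all blocks, $\rho(A\overline{A}) = \rho(J\overline{J})$ is the maximum of the values $\lambda^2$ (over first-type $\lambda$'s in $[0,\infty)$) and $|\lambda|$ (over second-type $\lambda$'s).

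Now the equivalences fall out cleanly. For the ``$\leq 1$'' direction, $\mathrm{co}\rho(A) \leq 1$ means $|\lambda| \leq 1$ for every $\lambda$ appearing; for the first type $\lambda \in [0,1]$ gives $\lambda^2 \leq 1$, and for the second type $|\lambda| \leq 1$ directly, so $\rho(A\overline{A}) \leq 1$. Conversely, $\rho(A\overline{A}) \leq 1$ forces $\lambda^2 \leq 1$ (hence $\lambda \in [0,1]$) for first-type blocks and $|\lambda| \leq 1$ for second-type blocks, so every con-eigenvalue satisfies $|\lambda| \leq 1$. The ``$\geq 1$'' direction is entirely analogous: a con-eigenvalue $\lambda$ with $|\lambda| \geq 1$ forces either $\lambda^2 \geq 1$ (first type, $\lambda \geq 1$) or $|\lambda| \geq 1$ (second type) to appear in the spectrum of $J\overline{J}$, and vice versa.

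The only real subtlety is the asymmetry between the two block types (squares of $\lambda$ appear for first-type blocks but only first powers for second-type blocks), but this asymmetry is harmless because the first-type $\lambda$'s are already nonnegative reals, so $\lambda^2 \leq 1 \Leftrightarrow |\lambda| \leq 1$ and $\lambda^2 \geq 1 \Leftrightarrow |\lambda| \geq 1$. That is the single observation that makes both inequalities match up with the con-spectrum radius.
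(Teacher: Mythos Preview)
Your proof is correct and follows essentially the same approach as the paper: both arguments rest on the con-similarity canonical form of Lemma~\ref{lm5} and the resulting correspondence between $\mathrm{co}\lambda(A)$ and $\lambda(A\overline{A})$. The paper quotes this correspondence as three facts from \cite{bhrh87ctmt}, whereas you derive it directly by computing $B\overline{B}$ for each canonical block --- a slightly more self-contained presentation of the same idea.
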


At the end of this section, we recall the well-known Schur
complement.

\begin{lemma}
Let matrix $\mathit{\Phi}$ be defined as%
\[
\mathit{\Phi}=\left[
\begin{array}
[c]{cc}%
\mathit{\Phi}_{11} & \mathit{\Phi}_{12}\\
\mathit{\Phi}_{12}^{*} & \mathit{\Phi}_{22}%
\end{array}
\right]  .
\]
Then the following three statements are equivalent:

\begin{enumerate}
\item $\mathit{\Phi}>0$.

\item $\mathit{\Phi}_{11}>0$ and $\mathit{\Phi}_{22}-\mathit{\Phi}%
_{12}^{*}\mathit{\Phi}_{11}^{-1}\mathit{\Phi}_{12}>0.$

\item $\mathit{\Phi}_{22}>0$ and $\mathit{\Phi}_{11}-\mathit{\Phi}%
_{12}\mathit{\Phi}_{22}^{-1}\mathit{\Phi}_{12}^{*}>0.$
\end{enumerate}
\end{lemma}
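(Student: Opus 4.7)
The plan is to prove this classical Schur complement result by producing an explicit block $LDL^{*}$ factorization and invoking the fact that congruence transformations preserve positive definiteness. I will establish $(1)\Leftrightarrow(2)$ first; the equivalence $(1)\Leftrightarrow(3)$ will then follow by an entirely symmetric argument (swapping the roles of $\mathit{\Phi}_{11}$ and $\mathit{\Phi}_{22}$).

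For the direction $(1)\Rightarrow(2)$, I would first note that $\mathit{\Phi}>0$ immediately forces $\mathit{\Phi}_{11}>0$ by restricting the quadratic form $x^{*}\mathit{\Phi}x$ to vectors of the shape $x=(y^{*},\mathbf{0}^{*})^{*}$; in particular $\mathit{\Phi}_{11}$ is invertible, so $\mathit{\Phi}_{11}^{-1}$ exists. With $\mathit{\Phi}_{11}^{-1}$ in hand, I would write down the congruence
\[
\left[\begin{array}{cc} I & \mathbf{0} \\ -\mathit{\Phi}_{12}^{*}\mathit{\Phi}_{11}^{-1} & I \end{array}\right]
\mathit{\Phi}
\left[\begin{array}{cc} I & -\mathit{\Phi}_{11}^{-1}\mathit{\Phi}_{12} \\ \mathbf{0} & I \end{array}\right]
= \left[\begin{array}{cc} \mathit{\Phi}_{11} & \mathbf{0} \\ \mathbf{0} & \mathit{\Phi}_{22}-\mathit{\Phi}_{12}^{*}\mathit{\Phi}_{11}^{-1}\mathit{\Phi}_{12} \end{array}\right],
\]
which is verified by direct block multiplication. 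Since the outer factors are nonsingular and Hermitian conjugates of one another, the transformation preserves positive definiteness, so the right-hand block-diagonal matrix is positive definite. Reading off its diagonal blocks gives (2).

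For $(2)\Rightarrow(1)$, I would reverse the above identity: solving for $\mathit{\Phi}$ gives
\[
\mathit{\Phi}=\left[\begin{array}{cc} I & \mathit{\Phi}_{11}^{-1}\mathit{\Phi}_{12} \\ \mathbf{0} & I \end{array}\right]^{*}
\left[\begin{array}{cc} \mathit{\Phi}_{11} & \mathbf{0} \\ \mathbf{0} & \mathit{\Phi}_{22}-\mathit{\Phi}_{12}^{*}\mathit{\Phi}_{11}^{-1}\mathit{\Phi}_{12} \end{array}\right]
\left[\begin{array}{cc} I & \mathit{\Phi}_{11}^{-1}\mathit{\Phi}_{12} \\ \mathbf{0} & I \end{array}\right],
\]
and the hypotheses in (2) make the middle block-diagonal matrix positive definite, so the congruence delivers $\mathit{\Phi}>0$. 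The equivalence $(1)\Leftrightarrow(3)$ is obtained by the analogous congruence
\[
\left[\begin{array}{cc} I & -\mathit{\Phi}_{12}\mathit{\Phi}_{22}^{-1} \\ \mathbf{0} & I \end{array}\right]
\mathit{\Phi}
\left[\begin{array}{cc} I & \mathbf{0} \\ -\mathit{\Phi}_{22}^{-1}\mathit{\Phi}_{12}^{*} & I \end{array}\right]
= \left[\begin{array}{cc} \mathit{\Phi}_{11}-\mathit{\Phi}_{12}\mathit{\Phi}_{22}^{-1}\mathit{\Phi}_{12}^{*} & \mathbf{0} \\ \mathbf{0} & \mathit{\Phi}_{22} \end{array}\right].
\]

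There is no real obstacle here; the whole argument is a two-line block computation combined with the elementary invariance of positive definiteness under nonsingular congruence. The only point that requires a shred of care is the initial observation that $\mathit{\Phi}>0$ implies $\mathit{\Phi}_{11}$ (and separately $\mathit{\Phi}_{22}$) is invertible, which is what legitimizes writing $\mathit{\Phi}_{11}^{-1}$ in the factorization. Since this lemma is standard and short, I expect the authors' proof to either be omitted or consist of essentially the two displayed identities above.
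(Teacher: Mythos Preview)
Your proof is correct and is exactly the standard block $LDL^{*}$ congruence argument for the Schur complement. As you anticipated, the paper does not give a proof at all: the lemma is simply ``recalled'' as the well-known Schur complement and stated without justification.
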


\section{\label{sec3}Necessary and Sufficient Conditions}

In this section, we study necessary and sufficient conditions for
the existence of a positive definite solution of the nonlinear
matrix equation (\ref{eq1}). Firstly we introduce an useful lemma.

\begin{lemma}
\label{lm1}Assume that $A$ is nonsingular. Then $X$ solves the
nonlinear matrix equation (\ref{eq1}) if and only if
$Y=I_{n}-\overline{X}$ solves the
following nonlinear matrix equation%
\begin{equation}
I_{n}=Y+A\overline{Y}^{-1}A^{*}. \label{eq5}%
\end{equation}

\end{lemma}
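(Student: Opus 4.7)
The plan is to prove both implications by direct algebraic manipulation, using only the nonsingularity of $A$ together with the identities $\overline{AB}=\overline{A}\,\overline{B}$ and $(A^{*})^{-1}=(A^{-1})^{*}$. No deeper tool (e.g.\ the $\heartsuit$/$\lozenge$ machinery, Schur complements, or the con-similarity canonical form) should be needed at this stage; those are reserved for the equivalence theorem that follows.

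First I would rewrite \eqref{eq1} (with $Q=I_{n}$) as $A^{*}\overline{X}^{-1}A = I_{n}-X$. Observing that $\overline{Y}=\overline{I_{n}-\overline{X}}=I_{n}-X$, this equation reads $\overline{Y}=A^{*}\overline{X}^{-1}A$. The right-hand side is a product of invertible matrices (here is where nonsingularity of $A$ enters, since $\overline{X}^{-1}$ is already assumed to exist in \eqref{eq1}), so $\overline{Y}$ is invertible; inverting and rearranging yields
\[
\overline{Y}^{-1} = A^{-1}\overline{X}A^{-*}.
\]
Multiplying on the left by $A$ and on the right by $A^{*}$ gives $A\overline{Y}^{-1}A^{*}=\overline{X}=I_{n}-Y$, which is precisely \eqref{eq5}.

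The converse direction simply reverses these steps: starting from $A\overline{Y}^{-1}A^{*}=I_{n}-Y$ and substituting $I_{n}-Y=\overline{X}$ exhibits $\overline{X}$ as a product of invertibles, so $\overline{X}^{-1}$ exists; solving gives $A^{*}\overline{X}^{-1}A = \overline{Y}=I_{n}-X$, which is \eqref{eq1}. The only point that needs care is the bookkeeping of invertibility at each step, so that every expression $\overline{X}^{-1}$ or $\overline{Y}^{-1}$ that appears has already been justified from nonsingularity of $A$ and the corresponding unconjugated equation. I do not foresee any real obstacle; conceptually the lemma is a change of variables $Y=I_{n}-\overline{X}$ that swaps the positions of the unknown and its conjugate and moves $A$ from the outside to the inside of the nonlinear term, and the manipulation above is exactly that swap carried out line by line.
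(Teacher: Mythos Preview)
Your proposal is correct and follows essentially the same route as the paper's proof: both arguments start from $A^{*}\overline{X}^{-1}A=I_{n}-X$, use the nonsingularity of $A$ to invert and isolate $\overline{X}$ (or equivalently $\overline{Y}^{-1}$), and then substitute $Y=I_{n}-\overline{X}$ to obtain \eqref{eq5}, with the converse handled symmetrically. Your version is slightly more explicit about justifying invertibility at each step, but the underlying manipulation is identical.
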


\begin{proof}
Let $X$ be a solution of equation (\ref{eq1}), then $A^{*}%
\overline{X}^{-1}A=I_{n}-X$ from which we get $\overline{X}^{-1}%
=A^{-*}\left(  I_{n}-X\right)  A^{-1}.$ Taking inverses on both
sides gives $\overline{X}=A\left(  I_{n}-X\right) ^{-1}A^{*}$ which
is equivalent to equation (\ref{eq5}) by setting
$I_{n}-\overline{X}=Y.$ The converse can be shown similarly.
\end{proof}

Our main result regarding the existence of positive definite
solution of equation (\ref{eq1}) is presented as follows.

\begin{theorem}
\label{th1}The nonlinear matrix equation in (\ref{eq1}) has a
solution $X>0$
if and only if the following nonlinear matrix equation%
\begin{equation}
I_{2n}=W+\left(  A^{\lozenge}\right)
^{\mathrm{T}}W^{-1}A^{\lozenge},
\label{eq2}%
\end{equation}
has a solution $W>0.$ Moreover, the following two statements hold
true:

\begin{enumerate}
\item If the nonlinear matrix equation in (\ref{eq1}) has a solution $X>0,$
then it must have a maximal positive definite solution $X_{+}.$
Particularly, if $W_{+}$ denotes the maximal solution of the
nonlinear matrix equation in (\ref{eq2}), then
$W_{+}=X_{+}^{\heartsuit}$, or
\begin{equation}
X_{+}=\frac{1}{2}\left[
\begin{array}
[c]{c}%
\mathrm{j}I_{n}\\
I_{n}%
\end{array}
\right]  ^{*}W_{+}\left[
\begin{array}
[c]{c}%
\mathrm{j}I_{n}\\
I_{n}%
\end{array}
\right]  . \label{eq6}%
\end{equation}

\item If $A$ is nonsingular and the nonlinear matrix equation (\ref{eq1}) has
positive definite solution $X>0$, then it must have a minimal
positive definite solution $X_{-}.$ Particularly, if $W_{-}$ denotes
the minimal
solution of the nonlinear matrix equation in (\ref{eq2}), then $W_{-}%
=X_{-}^{\heartsuit}$, or
\begin{equation}
X_{-}=\frac{1}{2}\left[
\begin{array}
[c]{c}%
\mathrm{j}I_{n}\\
I_{n}%
\end{array}
\right]  ^{*}W_{-}\left[
\begin{array}
[c]{c}%
\mathrm{j}I_{n}\\
I_{n}%
\end{array}
\right]  . \label{eq7}%
\end{equation}

\end{enumerate}
\end{theorem}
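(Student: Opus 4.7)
The plan is to use the operator $(\cdot)^{\heartsuit}$ as a bridge between the complex equation (\ref{eq1}) and the real equation (\ref{eq2}), and then to invoke the classical theory of the real nonlinear matrix equation $W+B^{\mathrm{T}}W^{-1}B=I$.

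For the ``only if'' direction, I would apply $(\cdot)^{\heartsuit}$ to both sides of (\ref{eq1}). Using the identities from Lemma \ref{lm0}(1)---namely $(AB)^{\heartsuit}=A^{\heartsuit}B^{\heartsuit}$, $(A^{*})^{\heartsuit}=(A^{\heartsuit})^{\mathrm{T}}$, $(X^{-1})^{\heartsuit}=(X^{\heartsuit})^{-1}$, $(\overline{X})^{\heartsuit}=E_{n}X^{\heartsuit}E_{n}$, and $A^{\lozenge}=E_{n}A^{\heartsuit}$---equation (\ref{eq1}) becomes exactly (\ref{eq2}) with $W=X^{\heartsuit}$, and Lemma \ref{lm0}(2) delivers $W>0$.

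For the converse together with the maximal-solution claim, I would appeal to the classical theory of the real equation $W+B^{\mathrm{T}}W^{-1}B=I_{2n}$ with $B=A^{\lozenge}$: the existence of any positive definite solution implies the existence of a maximal positive definite solution $W_{+}$, realized as the monotone decreasing limit of the iteration $W_{0}=I_{2n}$, $W_{k+1}=I_{2n}-(A^{\lozenge})^{\mathrm{T}}W_{k}^{-1}A^{\lozenge}$. The crucial step is to show by induction that every iterate has the form $W_{k}=Y_{k}^{\heartsuit}$ for some Hermitian $Y_{k}>0$. The base case $W_{0}=I_{n}^{\heartsuit}$ is immediate; the inductive step rests on the identity
\[
(A^{\lozenge})^{\mathrm{T}}(Y_{k}^{\heartsuit})^{-1}A^{\lozenge}=(A^{*})^{\heartsuit}E_{n}(Y_{k}^{-1})^{\heartsuit}E_{n}A^{\heartsuit}=(A^{*}\overline{Y}_{k}^{-1}A)^{\heartsuit},
\]
which yields $W_{k+1}=(I_{n}-A^{*}\overline{Y}_{k}^{-1}A)^{\heartsuit}$, so $Y_{k+1}=I_{n}-A^{*}\overline{Y}_{k}^{-1}A$ is again Hermitian and positive definite by Lemma \ref{lm0}(2). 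Passing to the limit gives $W_{+}=X_{+}^{\heartsuit}$ with $X_{+}>0$ solving (\ref{eq1}). Maximality of $X_{+}$ follows from the order-preserving property of $(\cdot)^{\heartsuit}$: if $X>0$ is any solution of (\ref{eq1}), the forward direction shows $X^{\heartsuit}$ solves (\ref{eq2}), whence $X^{\heartsuit}\le W_{+}=X_{+}^{\heartsuit}$ and so $X\le X_{+}$.

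Formula (\ref{eq6}) then follows from a direct block computation: writing $X_{+}=X_{1}+X_{2}\mathrm{j}$ with $X_{1},X_{2}$ real and expanding $\tfrac{1}{2}v^{*}X_{+}^{\heartsuit}v$ for $v=\begin{bmatrix}\mathrm{j}I_{n}\\I_{n}\end{bmatrix}$ recovers $X_{1}+X_{2}\mathrm{j}=X_{+}$. For the minimal-solution claim under the additional hypothesis that $A$ is nonsingular, I would use Lemma \ref{lm1} to obtain the order-reversing bijection $X\mapsto Y=I_{n}-\overline{X}$ between positive definite solutions of (\ref{eq1}) and those of (\ref{eq5}), the latter being of the same form as (\ref{eq1}) but with $A$ replaced by $A^{*}$; applying the just-established maximal result to (\ref{eq5}) produces its maximal solution $Y_{+}$, and $X_{-}=I_{n}-\overline{Y_{+}}$ is then the minimal positive definite solution of (\ref{eq1}), with formula (\ref{eq7}) following from the same block identity as (\ref{eq6}). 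The main technical obstacle is the induction showing that the real iteration preserves the $\heartsuit$-structure; this is precisely where the identity $A^{\lozenge}=E_{n}A^{\heartsuit}$ and the interaction between $E_{n}$ and complex conjugation encoded by $(\overline{X})^{\heartsuit}=E_{n}X^{\heartsuit}E_{n}$ become indispensable.
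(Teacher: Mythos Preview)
Your proposal is correct and follows essentially the same route as the paper: apply $(\cdot)^{\heartsuit}$ to pass from (\ref{eq1}) to (\ref{eq2}), then use the classical fixed-point iteration for the real equation together with an induction showing that each iterate retains the $\heartsuit$-structure, and finally invoke Lemma~\ref{lm1} for the minimal solution. The only point where you are a bit too quick is Item~2: having produced $X_{-}$ via the complex duality $X\mapsto I_{n}-\overline{X}$, you still need to verify that $X_{-}^{\heartsuit}$ coincides with the minimal solution $W_{-}$ of (\ref{eq2}) before formula~(\ref{eq7}) follows from the block identity; the paper closes this loop by applying the same duality at the level of the real equation (passing through the auxiliary equation $I_{2n}=Z+A^{\lozenge}Z^{-1}(A^{\lozenge})^{\mathrm{T}}$ and its maximal solution $Z_{+}=Y_{+}^{\heartsuit}$, so that $W_{-}=I_{2n}-Z_{+}=(I_{n}-Y_{+})^{\heartsuit}=X_{-}^{\heartsuit}$).
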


\begin{proof}
\textquotedblleft$\Longrightarrow$\textquotedblright \ Let $X>0$ be
a solution of equation (\ref{eq1}). Taking $\left(  \cdot \right)
^{\heartsuit}$ on both sides of equation (\ref{eq1}) and using Lemma
\ref{lm0} gives
\begin{align}
I_{n}^{\heartsuit}  &  =X^{\heartsuit}+\left(  A^{*}\overline{X}%
^{-1}A\right)  ^{\heartsuit}\nonumber \\
&  =X^{\heartsuit}+\left(  A^{\heartsuit}\right)
^{\mathrm{T}}\left(  \left(
\overline{X}\right)  ^{\heartsuit}\right)  ^{-1}A^{\heartsuit}\nonumber \\
&  =X^{\heartsuit}+\left(  A^{\heartsuit}\right)
^{\mathrm{T}}\left(
E_{n}X^{\heartsuit}E_{n}^{\mathrm{T}}\right)  ^{-1}A^{\heartsuit}\nonumber \\
&  =X^{\heartsuit}+\left(  A^{\lozenge}\right)  ^{\mathrm{T}}\left(
X^{\heartsuit}\right)  ^{-1}A^{\lozenge}, \label{eq13}%
\end{align}
which indicates that $W=X^{\heartsuit}>0$ is a solution of equation
(\ref{eq2}).

\textquotedblleft$\Longleftarrow$\textquotedblright \ Let equation
(\ref{eq2}) have a solution $W>0.$ Then it must have a maximal
solution according to Lemma \ref{lm8} in Appendix A. We denote such
maximal solution by $W_{+}.$ In the following we show that there
must exist a matrix $Y>0$ such that $W_{+}=Y^{\heartsuit}.$
According to Lemma \ref{lm9} in Appendix A, we know that
\begin{equation}
W_{k+1}=I_{2n}-\left(  A^{\lozenge}\right)  ^{\mathrm{T}}W_{k}^{-1}%
A^{\lozenge},\quad W_{0}=I_{2n}, \label{eq22}%
\end{equation}
converges monotonically to $W_{+},$ namely, $0<W_{+}\leq W_{k+1}\leq
W_{k}\leq I_{2n},k\geq0$ and
\begin{equation}
\lim_{k\rightarrow \infty}W_{k}=W_{+}>0. \label{eq11}%
\end{equation}
We show that, for any integer $k\geq0,$ there exists a matrix
$Y_{k}>0$ such that
\begin{equation}
W_{k}=Y_{k}^{\heartsuit},\quad \forall k\geq0. \label{eq12}%
\end{equation}
We show this by induction. Clearly, equation (\ref{eq12}) holds true
for $k=0$ by setting $Y_{0}=I_{n}.$ Assume that (\ref{eq12}) is true
with $k=s,$ say, there exists a $Y_{s}>0$ such that
$W_{s}=Y_{s}^{\heartsuit}.$ Then, for
$k=s+1,$ by applying Lemma \ref{lm0}, we have%
\begin{align*}
W_{s+1}  &  =I_{2n}-\left(  A^{\lozenge}\right)  ^{\mathrm{T}}\left(
Y_{s}^{\heartsuit}\right)  ^{-1}A^{\lozenge}\\
&  =I_{2n}-\left(  A^{\heartsuit}\right)  ^{\mathrm{T}}\left(  E_{n}%
Y_{s}^{\heartsuit}E_{n}^{\mathrm{T}}\right)  ^{-1}A^{\heartsuit}\\
&  =I_{2n}-\left(  A^{*}\right)  ^{\heartsuit}\left( \overline
{Y_{s}}^{-1}\right)  ^{\heartsuit}A^{\heartsuit}\\
&  =I_{2n}-\left(  A^{*}\overline{Y_{s}}^{-1}A\right) ^{\heartsuit
}\\
&  =Y_{s+1}^{\heartsuit},
\end{align*}
where $Y_{s+1}=I-A^{*}\overline{Y_{s}}^{-1}A.$ As $W_{s+1}>0,$ we
know that $Y_{s+1}>0$ also. Therefore, (\ref{eq12}) is proved by
induction.

Hence, it follows from (\ref{eq11}) and (\ref{eq12}) that there
exists a
matrix $Y_{\infty}>0$ such that%
\[
W_{+}=\lim_{k\rightarrow \infty}W_{k}=\lim_{k\rightarrow \infty}Y_{k}%
^{\heartsuit}=Y_{\infty}^{\heartsuit},
\]
namely,%
\[
I_{2n}=Y_{\infty}^{\heartsuit}+\left(  A^{\lozenge}\right)  ^{\mathrm{T}%
}\left(  Y_{\infty}^{\heartsuit}\right)  ^{-1}A^{\lozenge},
\]
which, by using a similar technique used in deriving (\ref{eq13}),
is
equivalent to%
\[
I_{n}=Y_{\infty}+A^{*}\overline{Y}_{\infty}^{-1}A.
\]
Hence the nonlinear matrix equation (\ref{eq1}) has a solution
$X=Y_{\infty}.$

\textit{Proof of Item 1:} Since for any positive definite solution
$X$ of (\ref{eq1})$,$ there is a real positive definite solution
$X^{\heartsuit}$ of (\ref{eq2}), we must have $X^{\heartsuit}\leq
W_{+}=Y_{\infty}^{\heartsuit}$ which indicates that $X\leq
Y_{\infty}.$ However, $X=Y_{\infty}$ is also a solution of the
nonlinear matrix equation (\ref{eq1}). Hence, the nonlinear matrix
equation (\ref{eq1}) has the maximal solution $X_{+}=Y_{\infty}.$
The relation $W_{+}=X_{+}^{\heartsuit}$ then follows directly.

\textit{Proof of Item 2}:\quad If the nonlinear matrix equation
(\ref{eq1}) has a positive definite solution $X>0$, then, as
$X<I_{n},$ by Lemma \ref{lm1}, the nonlinear matrix equation
(\ref{eq5}) also has a positive definite solution
$I_{n}-\overline{X}$, which, according to item 1 of this theorem,
indicates that equation (\ref{eq5}) must have a maximal positive
definite solution $Y_{+}.$ Hence, by applying Lemma \ref{lm1} again,
$X_{-}\triangleq I_{n}-\overline{Y}_{+}$ is also a solution of
equation (\ref{eq1}). In fact, $X_{-}$ is the minimal positive
definite solution of equation (\ref{eq1}). Otherwise assume that
$X_{\ast}\leq X_{-}$ is a positive definite solution of (\ref{eq1}).
Then
\[
Y_{\ast}=I_{n}-\overline{X}_{\ast}\geq I_{n}-\overline{X}_{-}=Y_{+},
\]
is a positive definite solution of (\ref{eq5}). This is impossible
since $Y_{+}$ is the maximal positive definite solution of equation
(\ref{eq5}) by assumption.

According to item 1 of this theorem, the maximal positive definite
solution $Y_{+}$ to equation (\ref{eq5}) is related with
$Z_{+}=Y_{+}^{\heartsuit}$
where $Z_{+}$ is the maximal positive definite solution of%
\begin{equation}
I_{2n}=Z+A^{\lozenge}Z^{-1}\left(  A^{\lozenge}\right)
^{\mathrm{T}}.
\label{eq8}%
\end{equation}
As $A^{\lozenge}$ is nonsingular, by applying Lemma \ref{lm1} again,
the maximal positive definite solution $Z_{+}$ to equation
(\ref{eq8}) is related with $Z_{+}=I_{2n}-Y_{-}$ where $Y_{-}$ is
the minimal positive definite solution of equation (\ref{eq2}).
Hence the minimal positive definite solution
$X_{-}$ to equation (\ref{eq1}) satisfies%
\[
X_{-}^{\heartsuit}=\left(  I_{n}-Y_{+}\right)  ^{\heartsuit}=I_{2n}%
-Z_{+}=Y_{-}.
\]
The proof is finished.
\end{proof}

\begin{remark}
The nonlinear matrix equation in (\ref{eq2}) is in the standard from
of (\ref{EqNonlinear}) with $Q=I_{n}$ (see Appendix) which has been
extensively studied in the literature. Therefore, by adopting the
existing results on the nonlinear matrix equation (\ref{eq2}), we
can get corresponding results on the original nonlinear matrix
equation (\ref{eq1}).
\end{remark}

According to the proof of Theorem \ref{th1}, we have the following
result regarding iteration based numerical solution of the nonlinear
matrix equation (\ref{eq1}).

\begin{corollary}
\label{coro2}Assume that the nonlinear matrix equation (\ref{eq1})
has a positive definite solution. Denote the largest solution by
$X_{+}.$ Then the
iteration%
\begin{equation}
W_{k+1}=I_{2n}-\left(  A^{\lozenge}\right)  ^{\mathrm{T}}W_{k}^{-1}%
A^{\lozenge},\quad W_{0}=I_{2n}, \label{eq14}%
\end{equation}
converges and such that $\lim_{k\rightarrow
\infty}W_{k}=X_{+}^{\heartsuit}.$ Moreover, if $\left \Vert
X_{+}^{-1}\overline{A}\right \Vert <1$, then the iteration in
(\ref{eq14}) converges to $X_{+}^{\heartsuit}$ with at least a
linear convergence rate, namely, there exists a $k^{\ast}>0$ and a
number
$0<\mu<1$ such that%
\begin{equation}
\left \Vert W_{k+1}-X_{+}^{\heartsuit}\right \Vert \leq \mu \left \Vert W_{k}%
-X_{+}^{\heartsuit}\right \Vert ,\quad \forall k\geq k^{\ast}.
\label{eq15}%
\end{equation}

\end{corollary}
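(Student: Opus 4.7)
The statement splits naturally into two claims, of which the first is immediate. The iteration (\ref{eq14}) is identical to the monotone iteration (\ref{eq22}) already analyzed inside the proof of the reverse implication of Theorem \ref{th1}; there it was shown to converge monotonically to $W_+$, the maximal positive definite solution of (\ref{eq2}), and $W_+ = X_+^{\heartsuit}$ was also established. Citing this yields $\lim_{k\to\infty} W_k = X_+^{\heartsuit}$.

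For the linear rate, I would first form the standard error recursion. Subtracting the fixed-point identity $W_+ = I_{2n} - (A^{\lozenge})^{\mathrm{T}} W_+^{-1} A^{\lozenge}$ from (\ref{eq14}) and invoking the resolvent identity $W_+^{-1} - W_k^{-1} = W_+^{-1}(W_k - W_+)W_k^{-1}$ produces
\[
W_{k+1} - W_+ = (A^{\lozenge})^{\mathrm{T}} W_+^{-1}\,(W_k - W_+)\,W_k^{-1} A^{\lozenge}.
\]
Taking $2$-norms, and exploiting that $W_+ = X_+^{\heartsuit}$ is real symmetric (so $\|(A^{\lozenge})^{\mathrm{T}} W_+^{-1}\| = \|W_+^{-1} A^{\lozenge}\|$), gives
\[
\|W_{k+1}-W_+\| \leq \|W_+^{-1} A^{\lozenge}\|\cdot \|W_k^{-1} A^{\lozenge}\|\cdot \|W_k - W_+\|.
\]

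The main obstacle is identifying the limiting factor $\|W_+^{-1} A^{\lozenge}\|$ with the hypothesis $\|X_+^{-1}\overline{A}\|$. Using $W_+^{-1} = (X_+^{-1})^{\heartsuit}$, $A^{\lozenge} = E_n A^{\heartsuit}$, and the identity $(X_+^{-1})^{\heartsuit} E_n = E_n (\overline{X_+^{-1}})^{\heartsuit}$ (a direct consequence of $(\overline{Z})^{\heartsuit} = E_n Z^{\heartsuit} E_n$ from Lemma \ref{lm0}), I would push the flip operator through to obtain
\[
W_+^{-1} A^{\lozenge} = (X_+^{-1})^{\heartsuit} E_n A^{\heartsuit} = E_n (\overline{X_+^{-1}})^{\heartsuit} A^{\heartsuit} = E_n (\overline{X_+^{-1}} A)^{\heartsuit}.
\]
Since $E_n$ is unitary and $\|M^{\heartsuit}\| = \|M\|$ by Lemma \ref{lm0}, this gives $\|W_+^{-1} A^{\lozenge}\| = \|\overline{X_+^{-1}} A\| = \|X_+^{-1}\overline{A}\|$. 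Continuity of the matrix inverse at $W_+ > 0$ then forces $\|W_k^{-1} A^{\lozenge}\| \to \|X_+^{-1}\overline{A}\|$. Picking any $\mu \in (\|X_+^{-1}\overline{A}\|^2, 1)$ (a nonempty interval by hypothesis) and $k^{\ast}$ large enough that $\|X_+^{-1}\overline{A}\|\cdot \|W_k^{-1} A^{\lozenge}\| \leq \mu$ for all $k \geq k^{\ast}$, the displayed bound collapses to (\ref{eq15}).
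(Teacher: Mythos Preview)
Your proof is correct and follows essentially the same approach as the paper: both cite the convergence from the proof of Theorem~\ref{th1}, derive the same error recursion via $W_+^{-1}-W_k^{-1}=W_+^{-1}(W_k-W_+)W_k^{-1}$, bound it by $\|W_+^{-1}A^{\lozenge}\|\cdot\|W_k^{-1}A^{\lozenge}\|\cdot\|W_k-W_+\|$, and then reduce $\|W_+^{-1}A^{\lozenge}\|$ to $\|X_+^{-1}\overline{A}\|$ using the identities of Lemma~\ref{lm0} before invoking continuity to choose $\mu$ and $k^{\ast}$. The only cosmetic difference is that you push $E_n$ through via $(X_+^{-1})^{\heartsuit}E_n=E_n(\overline{X_+^{-1}})^{\heartsuit}$ whereas the paper premultiplies by the unitary $E_n$ first; these are equivalent rearrangements of the same identity $(\overline{Z})^{\heartsuit}=E_nZ^{\heartsuit}E_n$.
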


\begin{proof}
The convergence of the iteration (\ref{eq14}) follows from the proof
of Theorem \ref{th1}. So we need only to show (\ref{eq15}). We use
the idea found in \cite{zhan95siam} to prove the result. According
to Theorem \ref{th1}, we have $X_{+}^{\heartsuit}=W_{+},$ the
maximal solution to equation (\ref{eq2}).
Notice that%
\begin{align}
\left \Vert W_{k+1}-X_{+}^{\heartsuit}\right \Vert  &  =\left \Vert W_{k+1}%
-W_{+}\right \Vert \nonumber \\
&  =\left \Vert I_{2n}-\left(  A^{\lozenge}\right)  ^{\mathrm{T}}W_{k}%
^{-1}A^{\lozenge}-\left(  I_{2n}-\left(  A^{\lozenge}\right)  ^{\mathrm{T}%
}W_{+}^{-1}A^{\lozenge}\right)  \right \Vert \nonumber \\
&  =\left \Vert \left(  A^{\lozenge}\right)  ^{\mathrm{T}}\left(  W_{+}%
^{-1}-W_{k}^{-1}\right)  A^{\lozenge}\right \Vert \nonumber \\
&  =\left \Vert \left(  W_{+}^{-1}A^{\lozenge}\right)
^{\mathrm{T}}\left(
W_{k}-W_{+}\right)  W_{k}^{-1}A^{\lozenge}\right \Vert \nonumber \\
&  \leq \left \Vert W_{+}^{-1}A^{\lozenge}\right \Vert \left \Vert W_{k}%
^{-1}A^{\lozenge}\right \Vert \left \Vert W_{k}-W_{+}\right \Vert . \label{eq16}%
\end{align}
On the other hand, by using Lemma \ref{lm0}, we can compute%
\begin{align*}
\left \Vert W_{+}^{-1}A^{\lozenge}\right \Vert  &  =\left \Vert
\left( X_{+}^{-1}\right)  ^{\heartsuit}E_{n}A^{\heartsuit}\right
\Vert =\left \Vert
E_{n}\left(  X_{+}^{-1}\right)  ^{\heartsuit}E_{n}A^{\heartsuit}\right \Vert \\
&  =\left \Vert \left(  \overline{X}_{+}^{-1}\right)  ^{\heartsuit
}A^{\heartsuit}\right \Vert =\left \Vert \left(
\overline{X}_{+}^{-1}A\right)
^{\heartsuit}\right \Vert \\
&  =\left \Vert \overline{X}_{+}^{-1}A\right \Vert =\left \Vert X_{+}%
^{-1}\overline{A}\right \Vert .
\end{align*}
Let $\mu \in \left(  0,1\right)  $ be such that $\left \Vert W_{+}^{-1}%
A^{\lozenge}\right \Vert =\left \Vert X_{+}^{-1}\overline{A}\right
\Vert <\sqrt{\mu}<1.$ As $\lim_{k\rightarrow
\infty}W_{k}=X_{+}^{\heartsuit},$ there exists a $k^{\ast}$ such
that $\left \Vert W_{k}^{-1}A^{\lozenge}\right \Vert
<\sqrt{\mu}<1,\forall k\geq k^{\ast}.$ Hence, the inequality in
(\ref{eq16}) reduces to (\ref{eq15}) immediately. The proof is
finished.
\end{proof}

We emphasize that the condition $\left \Vert
X_{+}^{-1}\overline{A}\right \Vert <1$ is only sufficient for
guaranteeing the linear convergence of the iteration in (\ref{eq14})
which converges as long as the nonlinear matrix equation (\ref{eq1})
has a positive definite solution. By combining Lemma \ref{lm1} and
Corollary \ref{coro2}, we can also present a result regarding
obtaining the minimal solution to the nonlinear matrix equation
(\ref{eq1}). The details are omitted for brevity.

\begin{example}
Consider the nonlinear matrix equation in Example \ref{example1}. By
computation we have $\left \Vert X_{+}^{-1}\overline{A}\right \Vert
=0.614<1.$ Then by Corollary \ref{coro2}, we conclude that the
corresponding iteration in (\ref{eq14})\ converges to
$X_{+}^{\heartsuit}$ at least linearly. For illustration, the
history of the iteration is recorded in Figure \ref{fig1}. From this
figure we see that the convergence of the corresponding iteration in
(\ref{eq14}) is indeed linear. Hence, the estimation in (\ref{eq15})
may be nonconservative.
\end{example}

\begin{figure}[ptb]
\begin{center}
\includegraphics[scale=0.8]{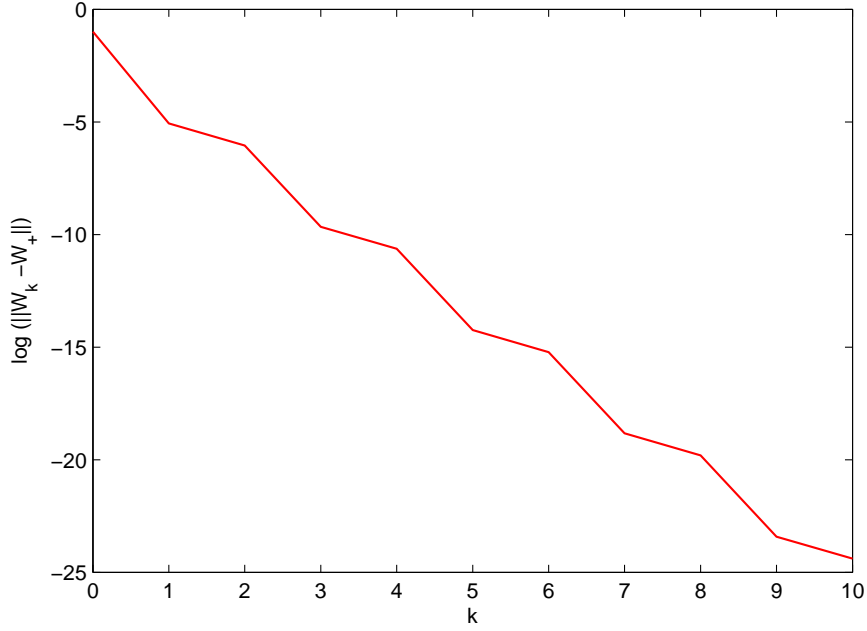}
\end{center}
\caption{Numerical solution of the nonlinear matrix equation
(\ref{eq1}) via
iteration (\ref{eq14})}%
\label{fig1}%
\end{figure}

In the particular case that $A$ is real, we can show the following
result.

\begin{corollary}
\label{coro1}Suppose that $A$ is real. If the nonlinear matrix
equation (\ref{EqNonlinear}) has a positive definite solution, then
$X_{+}$ is real. Furthermore, if $A$ is nonsingular, then $X_{-}$ is
also real. Hence in this
case, equation (\ref{eq1}) and the following nonlinear matrix equation%
\[
I_{n}=X+A^{\mathrm{T}}X^{-1}A,
\]
has the same maximal and minimal positive definite solutions.
\end{corollary}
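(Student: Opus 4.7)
The plan is to exploit the iteration from the proof of Theorem~\ref{th1} and observe that it preserves realness when $A$ is real. Specifically, to argue $X_+$ is real, I would revisit that proof: the iterates $W_k$ of (\ref{eq14}) satisfy $W_k = Y_k^{\heartsuit}$, where $Y_{k+1} = I_n - A^* \overline{Y_k}^{-1} A$ with $Y_0 = I_n$, and $Y_k \to X_+$. When $A$ is real, $A^* = A^T$, so a straightforward induction (using $\overline{Y_k} = Y_k$ whenever $Y_k$ is real) shows every $Y_k$ is real; passing to the limit yields $X_+$ real.

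For the minimal solution, I would follow item~2 of Theorem~\ref{th1}: $X_- = I_n - \overline{Y_+}$, where $Y_+$ is the maximal positive definite solution of (\ref{eq5}), namely $Y + A\overline{Y}^{-1} A^* = I_n$. This equation has the same form as (\ref{eq1}) with $A$ replaced by $A^*$, which remains real (and nonsingular when $A$ is), so the previous step applied to (\ref{eq5}) gives $Y_+$ real, and hence $X_- = I_n - Y_+$ is real.

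For the coincidence statement, I would note that when $X$ is real and positive definite and $A$ is real, $\overline{X}^{-1} = X^{-1}$ and $A^* = A^T$, so a real positive definite matrix solves (\ref{eq1}) if and only if it solves $I_n = X + A^T X^{-1} A$. Since $X_+$ is real, it solves both equations. To conclude that $X_+$ is the maximal positive definite solution of the classical equation, I would argue that that equation's maximal solution $X'_+$ must itself be real: conjugating $I_n = X + A^T X^{-1} A$ leaves it invariant and preserves Hermitian positive definiteness, so $\overline{X'_+}$ is also a positive definite solution; maximality combined with the fact that $Y \mapsto \overline{Y}$ preserves the L\"owner order on Hermitian matrices forces $X'_+ = \overline{X'_+}$. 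Being real, $X'_+$ then solves (\ref{eq1}), giving $X'_+ \leq X_+$, while $X_+ \leq X'_+$ is automatic. The minimal case is analogous, reducing through Lemma~\ref{lm1} to the maximal case.

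The main obstacle is this last coincidence step: the symmetry-via-conjugation argument that the classical equation's maximal positive definite solution is real is the one nontrivial ingredient, and it quietly invokes the (known) existence of that maximal solution for $I_n = X + A^T X^{-1} A$, which may be quoted from the references listed in the introduction.
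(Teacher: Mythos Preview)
Your proof is correct and follows essentially the same strategy as the paper for the realness of $X_+$---induction on the fixed-point iteration---but you express it directly at the $n\times n$ level (each $Y_k$ is real) whereas the paper carries out the equivalent computation at the $2n\times 2n$ level (each $W_k=Y_k^{\heartsuit}$ is block diagonal with equal diagonal blocks). These are two views of the same fact via the $\heartsuit$ operator; your formulation is a bit more transparent. Your handling of $X_-$ through Lemma~\ref{lm1} and item~2 of Theorem~\ref{th1} is also correct and more explicit than the paper's ``can be proved similarly.''

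For the coincidence step you actually supply more than the paper, which leaves the conclusion implicit. Your conjugation-symmetry argument that the classical equation's maximal solution $X_+'$ is real is valid, but note that this very fact is already recorded as Lemma~\ref{lm11} in the appendix (quoted from \cite{err93laa}); citing it directly would let you skip the symmetry argument and the concern you raise about invoking existence of $X_+'$.
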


\begin{proof}
Under the assumption of this corollary, $W_{+}=X_{+}^{\heartsuit}$
and $W_{-}=X_{-}^{\heartsuit}$ are respectively the maximal and
minimal solution of equation (\ref{eq2}). Therefore, we need only to
show that $W_{+}$ and
$W_{-}$ are in the form of%
\[
W_{+}=\left[
\begin{array}
[c]{cc}%
W_{1+} & 0\\
0 & W_{1+}%
\end{array}
\right]  ,\quad W_{-}=\left[
\begin{array}
[c]{cc}%
W_{1-} & 0\\
0 & W_{1-}%
\end{array}
\right]  .
\]
Since $W_{+}$ is the limit of the iteration in (\ref{eq22}), we only
need to
show that%
\begin{equation}
W_{k}=\left[
\begin{array}
[c]{cc}%
W_{1k} & 0\\
0 & W_{1k}%
\end{array}
\right]  ,\quad \forall k\geq0. \label{eq23}%
\end{equation}
We show this by induction. Clearly, (\ref{eq23}) holds true for
$k=0.$ Assume
that it is true with $k=s.$ Then, for $k=s+1,$ we can compute%
\[
W_{s+1}=\left[
\begin{array}
[c]{cc}%
I_{n}-A_{1}^{\mathrm{T}}W_{1s}A_{1} & 0\\
0 & I_{n}-A_{1}^{\mathrm{T}}W_{1s}A_{1}%
\end{array}
\right]  \triangleq \left[
\begin{array}
[c]{cc}%
W_{1\left(  s+1\right)  } & 0\\
0 & W_{1\left(  s+1\right)  }%
\end{array}
\right]  .
\]
Therefore, (\ref{eq23}) is proved by induction. The case $W_{-}$ can
be proved similarly.
\end{proof}

Combining Theorem \ref{th1}\ and Lemma \ref{lm10} gives the
following corollary.

\begin{corollary}
Suppose that $A$ is invertible. Then the nonlinear matrix equation
(\ref{eq1}) has a positive definite solution if and only if $\omega
\left(  A^{\lozenge }\right)  \leq \frac{1}{2}.$
\end{corollary}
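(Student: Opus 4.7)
The plan is to combine Theorem \ref{th1} with Lemma \ref{lm10}, which (presumably) provides the numerical-radius characterization for positive definite solvability of the standard real nonlinear matrix equation $I = W + B^{\mathrm{T}} W^{-1} B$ under invertibility of $B$. First I would invoke Theorem \ref{th1} to reduce solvability of (\ref{eq1}) to solvability of (\ref{eq2}), that is, $I_{2n} = W + (A^{\lozenge})^{\mathrm{T}} W^{-1} A^{\lozenge}$. Since $A^{\lozenge}$ is real by construction, this brings us into the setting where Lemma \ref{lm10} directly applies, with $B$ taken to be $A^{\lozenge}$.

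Second, I need to verify the invertibility hypothesis of Lemma \ref{lm10}, namely that $A^{\lozenge}$ is nonsingular. Using item 1 of Lemma \ref{lm0}, $A^{\lozenge} = E_n A^{\heartsuit}$, and $E_n$ is unitary, so this reduces to showing $A^{\heartsuit}$ is nonsingular. By item 2 of Lemma \ref{lm0}, $A^{\heartsuit} = P_n \,\mathrm{diag}(A,\overline{A})\, P_n^{*}$ with $P_n$ unitary, so $A^{\heartsuit}$ is nonsingular if and only if both $A$ and $\overline{A}$ are, which holds by the hypothesis that $A$ is invertible.

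Third, with the hypothesis verified, Lemma \ref{lm10} gives that equation (\ref{eq2}) admits a positive definite solution if and only if $\omega(A^{\lozenge}) \leq \tfrac{1}{2}$. Chaining this equivalence with the one furnished by Theorem \ref{th1} yields the stated corollary. I do not anticipate any real obstacle: the entire argument is a two-step reduction, and the only non-trivial bookkeeping is confirming that invertibility of $A$ passes through the $(\cdot)^{\heartsuit}$ and $(\cdot)^{\lozenge}$ operators, which is immediate from Lemma \ref{lm0}.
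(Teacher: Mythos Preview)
Your proposal is correct and follows exactly the route the paper takes: the paper simply states that the corollary follows by ``combining Theorem \ref{th1} and Lemma \ref{lm10},'' and your argument spells this out, including the verification (not made explicit in the paper) that invertibility of $A$ implies invertibility of $A^{\lozenge}$ via Lemma \ref{lm0}.
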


Our next theorem presents some properties of the maximal and minimal
positive definite solutions of the nonlinear matrix equation
(\ref{eq1}).

\begin{theorem}
\label{th2}Assume that the nonlinear matrix equation in (\ref{eq1})
has a solution $X>0.$

\begin{enumerate}
\item Let the maximal positive definite solution be $X_{+}.$ Then $X_{+}$ is
the unique solution such that
$X_{+}+s\overline{A}\overline{X}_{+}^{-1}A$ is invertible for all
$s\in \{z:\left \vert z\right \vert <1\}$, or equivalently,
\begin{equation}
\mathrm{co}\rho \left(  \overline{X}_{+}^{-1}A\right)  \leq1. \label{eq9}%
\end{equation}

\item Assume further that $A$ is nonsingular. Let the minimal positive
definite solution be $X_{-}.$ Then $X_{-}$ is the unique solution
such that $X_{-}+s\overline{A}^{*}\overline{X}_{-}^{-1}A^{*}$ is
invertible for all $s\in \{z:\left \vert z\right \vert >1\}$, or
equivalently,
\begin{equation}
\mathrm{co}\rho \left(  \overline{X}_{-}^{-1}A^{*}\right) \geq1.
\label{eq10}%
\end{equation}

\end{enumerate}
\end{theorem}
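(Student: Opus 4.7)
My approach is to reduce both statements to the corresponding characterizations of the extreme positive definite solutions of the real equation (\ref{eq2}), using Theorem \ref{th1} as the bridge.

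For Part 1, Theorem \ref{th1} identifies $W_{+} = X_{+}^{\heartsuit}$ with the maximal positive definite solution of (\ref{eq2}). I would invoke the classical characterization of the maximal positive definite solution of the standard equation $W + B^{\mathrm{T}} W^{-1} B = I$ with $B = A^{\lozenge}$ (see, e.g., \cite{err93laa}): $W_{+}$ is the unique positive definite solution for which $W_{+} + s B^{\mathrm{T}} W_{+}^{-1} B$ is invertible for every $|s|<1$, equivalently $\rho(W_{+}^{-1} B) \leq 1$. The transfer to $X_{+}$ rests on the algebraic identity
\[
X_{+}^{-1}\overline{A}\,\overline{X}_{+}^{-1}A \;=\; \overline{M}M, \qquad M := \overline{X}_{+}^{-1}A,
\]
together with the cyclic property $\rho(\overline{M}M) = \rho(M\overline{M})$. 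A direct determinantal calculation reduces invertibility of $X_{+} + s\overline{A}\overline{X}_{+}^{-1}A$ for all $|s|<1$ to $\rho(M\overline{M}) \leq 1$, which by Lemma \ref{lm2} coincides with $\mathrm{co}\rho(M) \leq 1$. To close the loop with the classical condition, Lemma \ref{lm0} gives $W_{+}^{-1}A^{\lozenge} = M^{\lozenge}$, so $\rho(W_{+}^{-1}A^{\lozenge}) = \rho^{\frac{1}{2}}(M\overline{M})$ and the classical condition $\rho(W_{+}^{-1}A^{\lozenge}) \leq 1$ is the same as $\rho(M\overline{M}) \leq 1$. Uniqueness of $X_{+}$ among positive definite solutions of (\ref{eq1}) with this property follows from uniqueness of $W_{+}$ and injectivity of $X \mapsto X^{\heartsuit}$.

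For Part 2, since $A$ is nonsingular I would use Lemma \ref{lm1}, which provides an order-reversing bijection $X \mapsto Y = I_{n} - \overline{X}$ between positive definite solutions of (\ref{eq1}) and those of the companion equation (\ref{eq5}), $I_{n} = Y + A\overline{Y}^{-1}A^{*}$. The minimal solution $X_{-}$ of (\ref{eq1}) then corresponds to the maximal solution $Y_{+} = I_{n} - \overline{X}_{-}$ of (\ref{eq5}). Since (\ref{eq5}) is (\ref{eq1}) with $A$ replaced by $A^{*}$, Part 1 applied to (\ref{eq5}) characterizes $Y_{+}$ uniquely. Combining this with the identity $I_{n} - X_{-} = A^{*}\overline{X}_{-}^{-1}A$ (and hence $\overline{Y}_{+}^{-1}A^{*} = A^{-1}\overline{X}_{-}$) translates the characterization into one directly on $X_{-}$, with the range $|s|>1$ arising from the order-reversing nature of the bijection. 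The $\mathrm{co}\rho$ form follows, as in Part 1, from the identity $X_{-}^{-1}\overline{A}^{*}\overline{X}_{-}^{-1}A^{*} = \overline{N}N$ with $N := \overline{X}_{-}^{-1}A^{*}$ together with Lemma \ref{lm2}, and uniqueness of $X_{-}$ is inherited from uniqueness of $Y_{+}$.

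The main obstacle will be the careful algebraic bookkeeping in Part 2: verifying that the condition on $Y_{+}$ obtained from Part 1 — which naturally reads $\mathrm{co}\rho(A^{-1}\overline{X}_{-}) \leq 1$ after substitution — is equivalent to the stated condition $\mathrm{co}\rho(\overline{X}_{-}^{-1}A^{*}) \geq 1$. Handling this requires several applications of Lemma \ref{lm2} and cyclic invariance of the spectrum, together with the reformulations $\overline{X}_{-} = A(I_{n} - X_{-})^{-1}A^{*}$ and $X_{-} = \overline{A}(I_{n} - \overline{X}_{-})^{-1}A^{\mathrm{T}}$ obtained by inverting equation (\ref{eq1}) and its conjugate, which allow the two expressions to be matched to a common spectral condition.
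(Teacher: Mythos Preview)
Your Part 1 follows the paper's approach in spirit---reduce to the classical characterization of $W_{+}$ via Theorem \ref{th1}---but with two remarks. First, the intermediate form you state, ``$W_{+}+sB^{\mathrm{T}}W_{+}^{-1}B$ invertible for $|s|<1$'', is not the characterization in Lemma \ref{lm8}; the classical statement is that $W_{+}+\lambda B$ is invertible for $|\lambda|<1$, which is what is equivalent to $\rho(W_{+}^{-1}B)\le 1$. You use the correct spectral form afterward, so the argument survives, but the misstatement should be fixed. Second, your computational route differs slightly: you identify $W_{+}^{-1}A^{\lozenge}=M^{\lozenge}$ and invoke item 4 of Lemma \ref{lm0}, whereas the paper computes $\det(W_{+}+\lambda A^{\lozenge})$ directly via the unitary $P_{n}$, obtaining $\det(X_{+})\det(X_{+}-\lambda^{2}\overline{A}\,\overline{X}_{+}^{-1}A)$, and then substitutes $s=-\lambda^{2}$. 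The paper's route makes the appearance of the expression $X_{+}+s\overline{A}\,\overline{X}_{+}^{-1}A$ immediate; yours is a bit more abstract but equally valid.

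For Part 2 your route genuinely diverges from the paper's. The paper does \emph{not} pass through Lemma \ref{lm1} and the dual equation (\ref{eq5}); instead it applies the second half of Lemma \ref{lm8} directly: $W_{-}=X_{-}^{\heartsuit}$ is the unique positive definite solution of (\ref{eq2}) for which $W_{-}+\lambda(A^{\lozenge})^{\mathrm{T}}$ is nonsingular for all $|\lambda|>1$, and then performs the same $P_{n}$-block determinant computation with $A$ replaced by $A^{*}$. This yields the stated condition on $X_{-}$ in one step, entirely bypassing the ``main obstacle'' you flag---the algebraic juggling needed to convert $\mathrm{co}\rho(A^{-1}\overline{X}_{-})\le 1$ into $\mathrm{co}\rho(\overline{X}_{-}^{-1}A^{*})\ge 1$. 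Your duality approach can be made to work, but it is considerably more laborious; the paper's direct use of the $W_{-}$ characterization is the cleaner path and you should prefer it.
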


\begin{proof}
\textit{Proof of Item 1}: By Theorem \ref{th1},
$W_{+}=X_{+}^{\heartsuit}$ is the maximal solution of the nonlinear
matrix equation (\ref{eq2}). Then according to Lemma \ref{lm8} in
appendix, $W_{+}$ is the unique positive definite solution of
equation (\ref{eq2}) such that $W_{+}+\lambda A^{\lozenge}$ is
nonsingular for all $\lambda \in \{z:\left \vert z\right \vert
<1\}.$ Since $P_{n}^{*}E_{n}P_{n}=E_{n},$ we can compute%
\begin{align*}
W_{+}+\lambda A^{\lozenge}  &  =X_{+}^{\heartsuit}+\lambda
E_{n}A^{\heartsuit
}\\
&  =P_{n}\left[
\begin{array}
[c]{cc}%
X_{+} & 0\\
0 & \overline{X}_{+}%
\end{array}
\right]  P_{n}^{*}+\lambda E_{n}P_{n}\left[
\begin{array}
[c]{cc}%
A & 0\\
0 & \overline{A}%
\end{array}
\right]  P_{n}^{*}\\
&  =P_{n}\left(  \left[
\begin{array}
[c]{cc}%
X_{+} & 0\\
0 & \overline{X}_{+}%
\end{array}
\right]  +\lambda P_{n}^{*}E_{n}P_{n}\left[
\begin{array}
[c]{cc}%
A & 0\\
0 & \overline{A}%
\end{array}
\right]  \right)  P_{n}^{*}\\
&  =P_{n}\left(  \left[
\begin{array}
[c]{cc}%
X_{+} & 0\\
0 & \overline{X}_{+}%
\end{array}
\right]  +\lambda E_{n}\left[
\begin{array}
[c]{cc}%
A & 0\\
0 & \overline{A}%
\end{array}
\right]  \right)  P_{n}^{*}\\
&  =P_{n}\left[
\begin{array}
[c]{cc}%
X_{+} & \lambda \overline{A}\\
\lambda A & \overline{X}_{+}%
\end{array}
\right]  P_{n}^{*},
\end{align*}
from which it follows that%
\begin{equation}
\det \left(  W_{+}+\lambda A^{\lozenge}\right)  =\det \left(
X_{+}\right) \det \left(
X_{+}-\lambda^{2}\overline{A}\overline{X}_{+}^{-1}A\right)  .
\label{eq21}%
\end{equation}
Hence, $\det \left(  W_{+}+\lambda A^{\lozenge}\right)  $ is nonzero
for all $\lambda \in \{z:\left \vert z\right \vert <1\}$ if and only
if $\det (X_{+}-\lambda^{2}\overline{A}\overline{X}_{+}^{-1}A)$ is
nonzero for all
$\lambda \in \{z:\left \vert z\right \vert <1\}.$ Notice that%
\[
\forall \lambda \in \{z:\left \vert z\right \vert <1\}
\Leftrightarrow \forall s\triangleq-\lambda^{2}\in \left \{  z:\left
\vert z\right \vert <1\right \}  .
\]
The first conclusion then follows directly. Moreover, it follows
from
(\ref{eq21}) that%
\begin{align*}
\det \left(  W_{+}+\lambda A^{\lozenge}\right)   &  =\det \left(
X_{+}\right) \det \left(
I_{n}+s\overline{A}\overline{X}_{+}^{-1}AX_{+}^{-1}\right)
\det \left(  X_{+}\right) \\
&  =\left(  \det \left(  X_{+}\right)  \right)  ^{2}\det \left(  I_{n}%
+s\overline{A}\overline{X}_{+}^{-1}AX_{+}^{-1}\right)  ,
\end{align*}
where $s\in \left \{  z:\left \vert z\right \vert <1\right \}  .$
Hence $\det \left(  W_{+}+\lambda A^{\lozenge}\right)  $ is nonzero
for all $\lambda \in \{z:\left \vert z\right \vert <1\}$ if and only
if the matrix $\overline{A}\overline{X}_{+}^{-1}AX_{+}^{-1}$ has no
poles $\alpha
\in \{z:\left \vert z\right \vert >1\}$ which is equivalent to%
\[
1\geq \rho \left(
\overline{A}\overline{X}_{+}^{-1}AX_{+}^{-1}\right) =\rho \left(
\left(  \overline{X}_{+}^{-1}A\right)  \left(  \overline
{\overline{X}_{+}^{-1}A}\right)  \right)  =\rho \left(  \left(
\overline {\overline{X}_{+}^{-1}A}\right)  \left(
\overline{X}_{+}^{-1}A\right) \right)  .
\]
By applying Lemma \ref{lm2}, the above inequality is equivalent to
(\ref{eq9}). Assume that there exists another positive definite
solution $X_{\ast}$ such that $\mathrm{co}\rho \left(
\overline{X}_{+}^{-1}A\right)
\leq1$. As the above process is invertible, we can show that $W_{\ast}%
=X_{\ast}^{\heartsuit}$ is such that $W_{+}+\lambda A^{\lozenge}$ is
nonsingular for all $\lambda \in \{z:\left \vert z\right \vert <1\}$
which contradicts with Lemma \ref{lm8} in appendix.

\textit{Proof of Item 2}: By Theorem \ref{th1},
$W_{-}=X_{-}^{\heartsuit}$ is the minimal solution of the nonlinear
matrix equation (\ref{eq2}). Then according to Lemma \ref{lm8} in
appendix, $W_{-}$ is the unique positive definite solution of
equation (\ref{eq2}) such that $W_{-}+\lambda \left(
A^{\lozenge}\right)  ^{\mathrm{T}}$ is nonsingular for all $\lambda
\in \{z:\left \vert z\right \vert >1\}.$ Similar to the proof of
item 1 of this
theorem, via some computation, we have%
\[
W_{-}+\lambda \left(  A^{\lozenge}\right)  ^{\mathrm{T}}=P_{n}\left[
\begin{array}
[c]{cc}%
X_{-} & \lambda \overline{A}^{*}\\
\lambda A^{*} & \overline{X}_{-}%
\end{array}
\right]  P_{n}^{*}.
\]
The remaining is similar to the proof of item 1 and is omitted for
brevity. The proof is complete.
\end{proof}

\section{\label{sec4}Bound of the Positive Definite Solutions}

In this section, we study the bound of the positive definite
solutions of the nonlinear matrix equation (\ref{eq1}). Let $\left
\{  H_{k}\right \}
_{k=1}^{\infty}$ be generated recursively as follows:%
\begin{equation}
H_{1}=I_{n},\quad H_{k+1}=\left \{
\begin{array}
[c]{l}%
\left[
\begin{BMAT}(@){c1c}{c1c} H_{k} & \left[ \begin{array} [c]{c}\mathbf{0}\\ A^{*}\end{array} \right] \\ \left[ \begin{array} [c]{cc}\mathbf{0} & A \end{array} \right] & I_{n}\end{BMAT}\right]
,\quad k\text{ is odd,}\\
\\
\left[
\begin{BMAT}(@){c1c}{c1c} H_{k} & \left[ \begin{array} [c]{c}\mathbf{0}\\ \overline{A}^{*}\end{array} \right] \\ \left[ \begin{array} [c]{cc}\mathbf{0} & \overline{A}\end{array} \right] & I_{n}\end{BMAT}\right]
,\quad k\text{ is even.}%
\end{array}
\right.  \label{eqH}%
\end{equation}
Here, if $k=2,$ \ the zero matrices in $H_{2}$ are obviously of zero
dimension. Then we can prove the following result.

\begin{theorem}
\label{th7}If the nonlinear matrix equation (\ref{eq1}) has a
positive definite solution, then $H_{k}>0,\forall k\geq1$, and for
any integer
$k\geq1,$ there holds%
\begin{equation}
X>\left[
\begin{array}
[c]{c}%
\mathbf{0}\\
\overline{A}^{*}%
\end{array}
\right]  ^{*}H_{k}^{-1}\left[
\begin{array}
[c]{c}%
\mathbf{0}\\
\overline{A}^{*}%
\end{array}
\right]  \triangleq S_{k},\quad k\text{ is even,} \label{eq31}%
\end{equation}
and%
\begin{equation}
X>\left[
\begin{array}
[c]{c}%
\mathbf{0}\\
\overline{A}^{*}%
\end{array}
\right]  ^{*}\overline{H}_{k}^{-1}\left[
\begin{array}
[c]{c}%
\mathbf{0}\\
\overline{A}^{*}%
\end{array}
\right]  \triangleq S_{k},\quad k\text{ is odd,} \label{eq32}%
\end{equation}
in which, if $k=1,$ the zero matrices involved are of zero
dimensions. Moreover, $S_{k}$ is nondecreasing, namely,
\begin{equation}
S_{k+1}\geq S_{k},\quad \forall k\geq1. \label{eq33}%
\end{equation}

\end{theorem}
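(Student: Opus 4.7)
The plan is to establish all three claims---$H_k > 0$, $X > S_k$, and $S_{k+1} \geq S_k$---by a chain of block Schur-complement manipulations anchored on the two matrix identities $X = I_n - A^{*}\overline{X}^{-1}A$ and $\overline{X} = I_n - \overline{A}^{*}X^{-1}\overline{A}$, the latter obtained by conjugating (\ref{eq1}). For the positive definiteness of $H_k$, I would introduce the auxiliary family $\{\widehat{H}_k\}_{k\geq 1}$ obtained from $H_k$ by replacing its bottom-right $I_n$ block by $X$ if $k$ is odd, and by $\overline{X}$ if $k$ is even. Then $\widehat{H}_1 = X > 0$, and a direct block computation using the two identities shows that the Schur complement of the bottom-right block of $\widehat{H}_{k+1}$ coincides with $\widehat{H}_k$. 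Induction on $k$ then gives $\widehat{H}_k > 0$ for every $k \geq 1$, and since $H_k - \widehat{H}_k$ vanishes outside its bottom-right corner, where it equals $I_n - X$ or $I_n - \overline{X}$ (both positive semidefinite), we conclude $H_k \geq \widehat{H}_k > 0$.

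For the lower bound, when $k$ is even I would consider the augmented matrix
\[
G_k = \left[\begin{array}{cc} H_k & v_k \\ v_k^{*} & X \end{array}\right], \qquad v_k = \left[\begin{array}{c} \mathbf{0}\\ \overline{A}^{*}\end{array}\right].
\]
Its Schur complement with respect to $X$ equals $\widehat{H}_k$ (via $I_n - \overline{A}^{*}X^{-1}\overline{A} = \overline{X}$), so $G_k > 0$; the Schur complement with respect to $H_k$ then delivers $X - v_k^{*} H_k^{-1} v_k = X - S_k > 0$, which is (\ref{eq31}). For $k$ odd, the bound (\ref{eq32}) is equivalent by entrywise conjugation to $\overline{X} > w_k^{*} H_k^{-1} w_k$ with $w_k = \left[\begin{array}{c}\mathbf{0}\\ A^{*}\end{array}\right]$, and the identical Schur-complement argument applies to $\left[\begin{array}{cc} H_k & w_k \\ w_k^{*} & \overline{X}\end{array}\right]$, whose Schur complement with respect to $\overline{X}$ is again $\widehat{H}_k$.

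For the monotonicity, I would derive the parity-independent recursion
\[
S_k = \overline{A}\left(I_n - \overline{S}_{k-1}\right)^{-1}\overline{A}^{*}, \qquad k\geq 2,
\]
by continuing the Schur-complement chain inside $G_k$ (or its odd-case analogue) one level deeper and conjugating where needed: Schur-complementing the top-left $H_{k-1}$ block identifies the resulting $(1,1)$ corner as $I_n - \overline{S}_{k-1}$ (or, for $k$ odd, as $I_n - S_{k-1}$, which yields the same formula after the final Schur complement and entrywise conjugation). With the recursion in hand, (\ref{eq33}) follows by induction. The base case reduces to $S_2 - S_1 = \overline{A}\bigl[(I_n - AA^{*})^{-1} - I_n\bigr]\overline{A}^{*} \geq 0$, which holds because $AA^{*}\geq 0$ and $I_n - AA^{*} > 0$ (the equation forces $A^{*}A \leq A^{*}\overline{X}^{-1}A = I_n - X < I_n$). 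For the inductive step, $S_{k-1} \leq S_k$ gives $\overline{S}_{k-1} \leq \overline{S}_k < I_n$, hence $(I_n - \overline{S}_{k-1})^{-1} \leq (I_n - \overline{S}_k)^{-1}$, and sandwiching by $\overline{A}$ and $\overline{A}^{*}$ delivers $S_k \leq S_{k+1}$.

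The principal burden will be bookkeeping the parity-dependent correspondence between bounds expressed in terms of $H_k$ and those in terms of $\overline{H}_k$, and verifying at each stage that the Schur complements invoked are genuinely invertible; the latter reduces to the uniform bound $S_k < X \leq I_n$ (and its conjugate) established in the preceding step. Beyond that, the proof consists of three clean applications of the Schur-complement equivalence together with a short positive-semidefinite-order induction.
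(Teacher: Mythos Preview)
Your argument is correct. For the positive definiteness of $H_k$ and the lower bounds $X>S_k$, your $\widehat{H}_k$ device is essentially a clean repackaging of what the paper does: the paper runs exactly the same chain of Schur complements, writing at each step
\[
0<\widehat{H}_{k+1}=H_{k+1}-\begin{bmatrix}\mathbf{0}\\ *\end{bmatrix}(\cdot)^{-1}\begin{bmatrix}\mathbf{0}\\ *\end{bmatrix}^{*},
\]
and reading off both $H_{k+1}>0$ and the bound on $X$ (or $\overline{X}$) simultaneously. So on this part the two proofs coincide.

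Where you genuinely diverge is in the monotonicity of $\{S_k\}$. The paper does \emph{not} derive the recursion $S_k=\overline{A}\,(I_n-\overline{S}_{k-1})^{-1}\overline{A}^{*}$. Instead it observes that $H_{k+1}$ can be reordered as
\[
H_{k+1}=\begin{bmatrix} I_n & \begin{bmatrix}A^{*}&\mathbf{0}\end{bmatrix}\\ \begin{bmatrix}A\\ \mathbf{0}\end{bmatrix} & \overline{H}_k\end{bmatrix},
\]
so that the bottom-right block of $H_{k+1}^{-1}$ is $\bigl(\overline{H}_k-\left[\begin{smallmatrix}A\\ \mathbf{0}\end{smallmatrix}\right]\left[\begin{smallmatrix}A^{*}&\mathbf{0}\end{smallmatrix}\right]\bigr)^{-1}\ge \overline{H}_k^{\,-1}$, and then $S_{k+1}\ge S_k$ drops out in one line without any induction. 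Your route is a step longer but yields the explicit recursion as a byproduct, which is informative in its own right (it makes transparent, for instance, why $S_k$ coincides with the iterates of the fixed-point map $Z\mapsto \overline{A}\,(I_n-\overline{Z})^{-1}\overline{A}^{*}$ started at $S_1=\overline{AA^{*}}$). The paper's route is quicker and avoids the separate base-case check. Both are sound; just be aware that your justification that $I_n-AA^{*}>0$ passes through $A^{*}A<I_n$ rather than $AA^{*}<I_n$, so you should note the equivalence $\lambda_{\max}(A^{*}A)=\lambda_{\max}(AA^{*})$ when you write it up.
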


\begin{proof}
Since $X=I_{n}-A^{*}\overline{X}^{-1}A>0,$ via a Schur implement, we
have%
\[
0<\left[
\begin{array}
[c]{cc}%
I_{n} & A^{*}\\
A & \overline{X}%
\end{array}
\right]  =\left[
\begin{array}
[c]{cc}%
I_{n} & A^{*}\\
A & I_{n}-\overline{A}^{*}X^{-1}\overline{A}%
\end{array}
\right]  =H_{2}-\left[
\begin{array}
[c]{c}%
0\\
\overline{A}^{*}%
\end{array}
\right]  X^{-1}\left[
\begin{array}
[c]{c}%
0\\
\overline{A}^{*}%
\end{array}
\right]  ^{*},
\]
which indicates that $H_{2}>0.$ Applying another Schur complement on
the above
inequality gives%
\begin{align*}
0<\left[  \begin{BMAT}(@){c1c}{c1c} H_{2} & \left[
\begin{array}
[c]{c}%
0\\
\overline{A}^{*}%
\end{array}
\right]  \\
\left[
\begin{array}
[c]{cc}%
0 & \overline{A}%
\end{array}
\right]   & X
\end{BMAT}\right]  &=\left[  \begin{BMAT}(@){c1c}{c1c}
H_{2} & \left[
\begin{array}
[c]{c}%
0\\
\overline{A}^{*}%
\end{array}
\right]  \\
\left[
\begin{array}
[c]{cc}%
0 & \overline{A}%
\end{array}
\right]   & I_{n}-A^{*}\overline{X}^{-1}A
\end{BMAT}\right] \\ &=H_{3}-\left[
\begin{array}
[c]{c}%
\mathbf{0}\\
A^{*}%
\end{array}
\right]  \overline{X}^{-1}\left[
\begin{array}
[c]{c}%
\mathbf{0}\\
A^{*}%
\end{array}
\right]  ^{*},
\end{align*}
which indicates that $H_{3}>0.$ By using Schur complement again, the
above
inequality can be equivalently rewritten as%
\begin{align*}
0<\left[  \begin{BMAT}(@){c1c}{c1c} H_{3} & \left[
\begin{array}
[c]{c}%
0\\
A^{*}%
\end{array}
\right]  \\
\left[
\begin{array}
[c]{cc}%
\mathbf{0} & A
\end{array}
\right]   & \overline{X}%
\end{BMAT}\right]  &=\left[  \begin{BMAT}(@){c1c}{c1c}
H_{3} & \left[
\begin{array}
[c]{c}%
\mathbf{0}\\
A^{*}%
\end{array}
\right]  \\
\left[
\begin{array}
[c]{cc}%
\mathbf{0} & A
\end{array}
\right]   & I_{n}-\overline{A}^{*}X^{-1}\overline{A}%
\end{BMAT}\right] \\ & =H_{4}-\left[
\begin{array}
[c]{c}%
\mathbf{0}\\
\overline{A}^{*}%
\end{array}
\right]  X^{-1}\left[
\begin{array}
[c]{c}%
\mathbf{0}\\
\overline{A}^{*}%
\end{array}
\right]  ^{*},
\end{align*}
which indicates that $H_{4}>0.$ Repeating the above process gives
$H_{k}>0,\forall k\geq1.$

On the other hand, from the above development, we see that, if $k$
is even, then
\[
\left[  \begin{BMAT}(@){c1c}{c1c} H_{k} & \left[
\begin{array}
[c]{c}%
0\\
\overline{A}^{*}%
\end{array}
\right]  \\
\left[
\begin{array}
[c]{cc}%
0 & \overline{A}%
\end{array}
\right]   & X
\end{BMAT}\right]  >0,
\]
and if $k$ if odd, then%
\[
\left[  \begin{BMAT}(@){c1c}{c1c} H_{k} & \left[
\begin{array}
[c]{c}%
\mathbf{0}\\
A^{*}%
\end{array}
\right]  \\
\left[
\begin{array}
[c]{cc}%
\mathbf{0} & A
\end{array}
\right]   & \overline{X}%
\end{BMAT}\right]  >0,
\]
where, if $k=1,$ the zero matrices are of zero dimension. We notice
that the above two inequalities are respectively equivalent to
(\ref{eq31}) and (\ref{eq32}) via Schur complements.

We finally show (\ref{eq33}). We only prove the case that $k$ is odd
since the
case that $k$ is even can be proven similarly. As $k+1$ is even, we have%
\begin{align}
S_{k+1}  &  =\left[
\begin{array}
[c]{c}%
\mathbf{0}\\
\overline{A}^{*}%
\end{array}
\right]  ^{*}H_{k+1}^{-1}\left[
\begin{array}
[c]{c}%
\mathbf{0}\\
\overline{A}^{*}%
\end{array}
\right] \nonumber \\
&  =\left[
\begin{array}
[c]{c}%
\mathbf{0}\\
\overline{A}^{*}%
\end{array}
\right]  ^{*}\left[
\begin{BMAT}(@){c1c}{c1c} H_{k} & \left[ \begin{array} [c]{c}\mathbf{0}\\ A^{*}\end{array} \right] \\ \left[ \begin{array} [c]{cc}\mathbf{0} & A \end{array} \right] & I_{n}\end{BMAT}\right]
^{-1}\left[
\begin{array}
[c]{c}%
\mathbf{0}\\
\overline{A}^{*}%
\end{array}
\right]  . \label{eq36}%
\end{align}
We notice that, if $k$ is odd, then%
\begin{align*}
\left[
\begin{BMAT}(@){c1c}{c1c} H_{k} & \left[ \begin{array} [c]{c}\mathbf{0}\\ A^{*}\end{array} \right] \\ \left[ \begin{array} [c]{cc}\mathbf{0} & A \end{array} \right] & I_{n}\end{BMAT}\right]
&  =\left[
\begin{BMAT}(@){c1c}{c1c} I_{n} & \left[ \begin{array} [c]{cc}A^{*} & \mathbf{0}\end{array} \right] \\ \left[ \begin{array} [c]{c}A\\ \mathbf{0}\end{array} \right] & \overline{H}_{k}\end{BMAT}\right]
\\
&  =\left[
\begin{array}
[c]{cc}%
\triangledown_{11} & \triangledown_{12}\\
\triangledown_{12}^{*} & \left(  \overline{H}_{k}-\left[
\begin{array}
[c]{c}%
A\\
\mathbf{0}%
\end{array}
\right]  \left[
\begin{array}
[c]{cc}%
A^{*} & \mathbf{0}%
\end{array}
\right]  \right)  ^{-1}%
\end{array}
\right]  ^{-1},
\end{align*}
where $\triangledown_{11}$ and $\triangledown_{12}$ are two matrices
that are
of no concern. With this, the relation in (\ref{eq36}) can be continued as%
\begin{align*}
S_{k+1}  &  =\left[
\begin{BMAT}(c){c}{c1c} \mathbf{0}\\ \left[ \begin{array} [c]{c}\mathbf{0}\\ \overline{A}^{*}\end{array} \right] \end{BMAT}\right]
^{*}\left[
\begin{BMAT}(@){c1c}{c1c} I_{n} & \left[ \begin{array} [c]{cc}A^{*} & \mathbf{0}\end{array} \right] \\ \left[ \begin{array} [c]{c}A\\ \mathbf{0}\end{array} \right] & \overline{H}_{k}\end{BMAT}\right]
^{-1}\left[
\begin{BMAT}(c){c}{c1c} \mathbf{0}\\ \left[ \begin{array} [c]{c}\mathbf{0}\\ \overline{A}^{*}\end{array} \right] \end{BMAT}\right]
\\
&  =\left[
\begin{array}
[c]{c}%
\mathbf{0}\\
\overline{A}^{*}%
\end{array}
\right]  ^{*}\left(  \overline{H}_{k}-\left[
\begin{array}
[c]{c}%
A\\
\mathbf{0}%
\end{array}
\right]  \left[
\begin{array}
[c]{cc}%
A^{*} & \mathbf{0}%
\end{array}
\right]  \right)  ^{-1}\left[
\begin{array}
[c]{c}%
\mathbf{0}\\
\overline{A}^{*}%
\end{array}
\right] \\
&  \geq \left[
\begin{array}
[c]{c}%
\mathbf{0}\\
\overline{A}^{*}%
\end{array}
\right]  ^{*}\overline{H}_{k}^{-1}\left[
\begin{array}
[c]{c}%
\mathbf{0}\\
\overline{A}^{*}%
\end{array}
\right] \\
&  =S_{k}.
\end{align*}
In the above development, the zero matrices involved are of zero
dimension if $k=1.$ The proof is finished.
\end{proof}

Since $S_{k}$ is bounded above, there must exist a positive definite
matrix
$S_{\infty}<I_{n}$ such that%
\[
\lim_{k\rightarrow \infty}S_{k}=S_{\infty}.
\]
Obviously, we have $X>S_{\infty}.$ It is not clear whether
$S_{\infty}=X_{-}.$ Moreover, it is clear that the larger the $k$,
the better the $S_{k}$ gives a lower bound of $X.$ However, large
$k$ may lead to numerical problems. By choosing some special values
in $k$, the following corollary can be obtained.

\begin{corollary}
If the nonlinear matrix equation (\ref{eq1}) has a positive definite
solution,
then $A$ satisfies%
\begin{equation}
I_{n}>AA^{*}+\overline{A^{*}A}. \label{eq38}%
\end{equation}
Moreover, the solution $X$ satisfies the following inequalities%
\begin{align}
X  &  >S_{1}=\overline{AA^{*}},\label{eq24}\\
X  &  >S_{2}=\overline{A}\left(  I_{n}-AA^{*}\right)  ^{-1}%
A^{\mathrm{T}},\label{eq26}\\
X  &  >S_{3}=\overline{A}\left(  I_{n}-A\left(  I_{n}-\overline{AA^{*%
}}\right)  ^{-1}A^{*}\right)  ^{-1}A^{\mathrm{T}}. \label{eq27}%
\end{align}

\end{corollary}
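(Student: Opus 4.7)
My plan is to deduce all four inequalities by specializing Theorem \ref{th7} to $k=1,2,3$, together with one extra Schur-complement argument applied to $H_{3}>0$. The only algebraic identity I will need repeatedly is $\overline{A}^{*}=A^{\mathrm{T}}$, which follows from $A^{*}=\overline{A}^{\mathrm{T}}$.

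For the three explicit bounds, note first that the formulas in Theorem \ref{th7} reduce, since the pad $\mathbf{0}$ has column dimension $n$ and row dimension $(k-1)n$, to $S_{k}=\overline{A}\,[\,\cdot\,]_{\mathrm{br}}\,A^{\mathrm{T}}$, where $[\,\cdot\,]_{\mathrm{br}}$ denotes the trailing $n\times n$ block of $H_{k}^{-1}$ (even $k$) or $\overline{H}_{k}^{-1}$ (odd $k$). For $k=1$ one has $\overline{H}_{1}=I_{n}$ and the pad is $0$-dimensional, so $S_{1}=\overline{A}A^{\mathrm{T}}=\overline{AA^{*}}$, proving (\ref{eq24}). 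For $k=2$, a single Schur complement on $H_{2}$, whose $2\times 2$ block structure has $I_{n}$ on the diagonal and $A,A^{*}$ off-diagonal, gives the trailing block of $H_{2}^{-1}$ as $(I_{n}-AA^{*})^{-1}$, which yields (\ref{eq26}). For $k=3$, two nested Schur complements on $\overline{H}_{3}$ give the trailing block of $\overline{H}_{3}^{-1}$ as $(I_{n}-A(I_{n}-\overline{A}A^{\mathrm{T}})^{-1}A^{*})^{-1}$; since $\overline{A}A^{\mathrm{T}}=\overline{AA^{*}}$, this produces exactly (\ref{eq27}).

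To obtain the necessary condition (\ref{eq38}) I will exploit $H_{3}>0$ from Theorem \ref{th7}, but partition $H_{3}$ in the opposite way: write it with the trailing $I_{n}$ as a single $n\times n$ corner block and $H_{2}$ as the remaining $2n\times 2n$ block. Applying the Schur complement against the trailing $I_{n}$ shows that $H_{3}>0$ is equivalent to the positive definiteness of $H_{2}-\mathrm{diag}(0,\overline{A}^{*}\overline{A})$; a second Schur complement against its leading $I_{n}$ reduces this to $I_{n}-\overline{A}^{*}\overline{A}-AA^{*}>0$. The identity $\overline{A}^{*}\overline{A}=A^{\mathrm{T}}\overline{A}=\overline{A^{*}A}$ then yields (\ref{eq38}).

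The main obstacle is purely bookkeeping: keeping conjugates, transposes, and the block positions of the zero pads straight across the successive Schur-complement computations, and in particular handling the special case $k=1$ where the zero pads have zero dimension. No further ideas beyond Theorem \ref{th7} and the Schur complement are required.
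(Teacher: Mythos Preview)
Your proposal is correct and follows essentially the same route as the paper: specialize Theorem~\ref{th7} to $k=1,2,3$ and read off the trailing $n\times n$ block of the relevant inverse via Schur complements. The only minor variation is in deriving (\ref{eq38}): the paper first takes the Schur complement of $H_{2}$ in $H_{3}$, re-expands the resulting scalar inequality as a $2n\times 2n$ block inequality, and then takes one more Schur complement, whereas you go in the opposite order (peel off the trailing $I_{n}$ first, then the leading $I_{n}$); your two-step version is slightly more direct but not a different idea.
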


\begin{proof}
Let $k=2.$ We get from Theorem \ref{th7} that $H_{2}>0,$ which, via
a Schur complement, is equivalent to $I_{n}>AA^{*}$. Similarly, by
letting
$k=3,$ we get from $H_{3}>0$ that%
\[
0<\left[
\begin{array}
[c]{ccc}%
I_{n} & A^{*} & \mathbf{0}\\
A & I_{n} & \overline{A}^{*}\\
\mathbf{0} & \overline{A} & I_{n}%
\end{array}
\right]  ,
\]
which, via a Schur complement, implies%
\[
I_{n}-\left[
\begin{array}
[c]{cc}%
0 & \overline{A}%
\end{array}
\right]  \left[
\begin{array}
[c]{cc}%
I_{n} & A^{*}\\
A & I_{n}%
\end{array}
\right]  ^{-1}\left[
\begin{array}
[c]{c}%
0\\
\overline{A}^{*}%
\end{array}
\right]  =I_{n}-\overline{A}\left(  I_{n}-AA^{*}\right)
^{-1}A^{\mathrm{T}}>0.
\]
By applying Schur complement again, the above inequality is equivalent to%
\[
\left[
\begin{array}
[c]{cc}%
I_{n} & \overline{A}\\
A^{\mathrm{T}} & I_{n}-AA^{*}%
\end{array}
\right]  >0.
\]
However, another Schur complement indicates that the above
inequality implies $A^{*}A+\overline{AA^{*}}<I_{n}$.

Now if we set $k=1,$ we get from (\ref{eq32}) that $\overline{X}%
>AA^{*}=\overline{S}_{1}$ (or $X>\overline{A}A^{\mathrm{T}}=S_{1}$)
which is (\ref{eq24}); if we set $k=2,$ we get from (\ref{eq31}) that%
\begin{align*}
X  &  >S_{2}=\left[
\begin{array}
[c]{cc}%
0 & \overline{A}%
\end{array}
\right]  \left[
\begin{array}
[c]{cc}%
I_{n} & A^{*}\\
A & I_{n}%
\end{array}
\right]  ^{-1}\left[
\begin{array}
[c]{c}%
0\\
\overline{A}^{*}%
\end{array}
\right] \\
&  =\left[
\begin{array}
[c]{cc}%
0 & \overline{A}%
\end{array}
\right]  \left[
\begin{array}
[c]{cc}%
\triangledown_{11} & \triangledown_{12}\\
\triangledown_{12}^{*} & \left(  I_{n}-AA^{*}\right)  ^{-1}%
\end{array}
\right]  \left[
\begin{array}
[c]{c}%
0\\
\overline{A}^{*}%
\end{array}
\right] \\
&  =\overline{A}\left(  I_{n}-AA^{*}\right) ^{-1}A^{\mathrm{T}},
\end{align*}
which is (\ref{eq26}). Here $\triangledown_{ij}$ denotes the
elements that are of no concern. Moreover, if we set $k=3,$ we know
from (\ref{eq32}) that
\begin{align*}
X  &  >S_{3}=\left[
\begin{array}
[c]{c}%
\mathbf{0}\\
\overline{A}^{*}%
\end{array}
\right]  ^{*}\overline{H}_{3}^{-1}\left[
\begin{array}
[c]{c}%
\mathbf{0}\\
\overline{A}^{*}%
\end{array}
\right] \\
&  =\left[
\begin{array}
[c]{cc}%
\mathbf{0} & \overline{A}%
\end{array}
\right]  \left[
\begin{array}
[c]{ccc}%
I_{n} & \overline{A}^{*} & \mathbf{0}\\
\overline{A} & I_{n} & A^{*}\\
\mathbf{0} & A & I_{n}%
\end{array}
\right]  ^{-1}\left[
\begin{array}
[c]{c}%
0\\
\overline{A}^{*}%
\end{array}
\right] \\
&  =\left[
\begin{array}
[c]{cc}%
\mathbf{0} & \overline{A}%
\end{array}
\right]  \left[
\begin{array}
[c]{cc}%
\triangledown_{11} & \triangledown_{12}\\
\triangledown_{12}^{*} & \left(  I_{n}-A\left( I_{n}-\overline
{AA^{*}}\right)  ^{-1}A^{*}\right)  ^{-1}%
\end{array}
\right]  \left[
\begin{array}
[c]{c}%
\mathbf{0}\\
\overline{A}^{*}%
\end{array}
\right] \\
&  =\overline{A}\left(  I_{n}-A\left(  I_{n}-\overline{AA^{*}%
}\right)  ^{-1}A^{*}\right)  ^{-1}A^{\mathrm{T}},
\end{align*}
which is just (\ref{eq27}). The proof is finished.
\end{proof}

With the help of Lemma \ref{lm1} and Theorem \ref{th7}, we can also
obtain upper bounds of $X.$ To this end, we let $\left \{
G_{k}\right \} _{k=1}^{\infty}$ be generated as (\ref{eqH}) where
$A^{*}$ is
replaced with $A,$ namely,%
\[
G_{1}=I_{n},\quad G_{k+1}=\left \{
\begin{array}
[c]{l}%
\left[
\begin{BMAT}(@){c1c}{c1c} G_{k} & \left[ \begin{array} [c]{c}\mathbf{0}\\ A \end{array} \right] \\ \left[ \begin{array} [c]{cc}\mathbf{0} & A^{*}\end{array} \right] & I_{n}\end{BMAT}\right]
,\quad k\text{ is odd}\\
\\
\left[
\begin{BMAT}(@){c1c}{c1c} G_{k} & \left[ \begin{array} [c]{c}\mathbf{0}\\ \overline{A}\end{array} \right] \\ \left[ \begin{array} [c]{cc}\mathbf{0} & \overline{A}^{*}\end{array} \right] & I_{n}\end{BMAT}\right]
,\quad k\text{ is even.}%
\end{array}
\right.
\]
Again, if $k=2,$\ the zero matrices in $G_{2}$ are obviously of zero
dimension.

\begin{theorem}
Assume that $A$ is nonsingular and the nonlinear matrix equation in
(\ref{eq1}) has a positive definite solution. Then $G_{k}>0,\forall
k\geq1$,
and for any integer $k\geq1,$ there holds%
\begin{equation}
X<I_{n}-\left[
\begin{array}
[c]{c}%
\mathbf{0}\\
A
\end{array}
\right]  ^{*}\overline{G}_{k}^{-1}\left[
\begin{array}
[c]{c}%
\mathbf{0}\\
A
\end{array}
\right]  \triangleq R_{k},\quad k\text{ is even,} \label{eq37}%
\end{equation}
and%
\begin{equation}
X<I_{n}-\left[
\begin{array}
[c]{c}%
\mathbf{0}\\
A
\end{array}
\right]  ^{*}G_{k}^{-1}\left[
\begin{array}
[c]{c}%
\mathbf{0}\\
A
\end{array}
\right]  \triangleq R_{k},\quad k\text{ is odd.} \label{eq34}%
\end{equation}
in which, if $k=1,$ the zero matrices involved are of zero
dimension. Moreover, $R_{k}$ is non-increasing, namely,
\begin{equation}
R_{k+1}\leq R_{k},\quad \forall k\geq1. \label{eq35}%
\end{equation}

\end{theorem}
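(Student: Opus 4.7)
The plan is to reduce this upper-bound statement to the lower-bound Theorem \ref{th7} via the change of variable from Lemma \ref{lm1}. Since $A$ is nonsingular and $X$ solves (\ref{eq1}), one has $A^{*}\overline{X}^{-1}A>0$, hence $X<I_{n}$ and $Y\triangleq I_{n}-\overline{X}>0$ is a positive definite solution of equation (\ref{eq5}): $I_{n}=Y+A\overline{Y}^{-1}A^{*}$. The key observation is that (\ref{eq5}) has exactly the form of (\ref{eq1}) under the formal substitution $A\to A^{*}$.

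The first step is a direct term-by-term check that the recursion defining $G_{k}$ is precisely the recursion (\ref{eqH}) defining $H_{k}$ under the replacement $A\mapsto A^{*}$. At odd levels, the off-diagonal pair $(A^{*},A)$ of $H_{k+1}$ is sent to $(A,A^{*})$, matching $G_{k+1}$; at even levels, $(\overline{A}^{*},\overline{A})$ is sent to $(\overline{A^{*}}^{*},\overline{A^{*}})=(\overline{A},\overline{A}^{*})$, again matching $G_{k+1}$. Applying Theorem \ref{th7} to equation (\ref{eq5}) therefore yields $G_{k}>0$ for all $k\geq 1$ together with lower bounds $Y>\widetilde{S}_{k}$, where $\widetilde{S}_{k}$ is the image of $S_{k}$ under the substitution: the column-vector block $\overline{A}^{*}$ in $S_{k}$ becomes $\overline{A^{*}}^{*}=\overline{A}$ in $\widetilde{S}_{k}$, and the even/odd pairing of $H_{k}^{-1}$/$\overline{H}_{k}^{-1}$ turns into $G_{k}^{-1}$/$\overline{G}_{k}^{-1}$.

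The second step is to translate back to $X$. From $Y>\widetilde{S}_{k}$ and $\overline{X}=I_{n}-Y$ one gets $\overline{X}<I_{n}-\widetilde{S}_{k}$, and taking entrywise conjugates (which preserves the Hermitian ordering) yields $X<I_{n}-\overline{\widetilde{S}_{k}}$. The identities $\overline{M^{*}}=(\overline{M})^{*}$ and $\overline{G_{k}^{-1}}=\overline{G}_{k}^{-1}$ simplify $\overline{\widetilde{S}_{k}}$ so that the $\overline{A}$ in its column vector becomes $A$ and the $G_{k}^{-1}$/$\overline{G}_{k}^{-1}$ pattern flips to $\overline{G}_{k}^{-1}$/$G_{k}^{-1}$ for even/odd $k$; this produces precisely $R_{k}$ as defined in (\ref{eq37}) and (\ref{eq34}). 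The monotonicity $R_{k+1}\leq R_{k}$ in (\ref{eq35}) then follows at once from the analogue $\widetilde{S}_{k+1}\geq\widetilde{S}_{k}$ of (\ref{eq33}), since $R_{k}=I_{n}-\overline{\widetilde{S}_{k}}$ reverses inequalities. The only real difficulty is careful bookkeeping of the conjugation identities in the recursion and in the final bound; no analytical content beyond Lemma \ref{lm1} and Theorem \ref{th7} is required.
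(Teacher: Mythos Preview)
Your proposal is correct and follows essentially the same route as the paper: invoke Lemma~\ref{lm1} to pass from $X$ to $Y=I_{n}-\overline{X}$ solving (\ref{eq5}), recognize (\ref{eq5}) as (\ref{eq1}) with $A$ replaced by $A^{*}$ so that $G_{k}$ is exactly the $H_{k}$-recursion under this substitution, apply Theorem~\ref{th7} to obtain $G_{k}>0$ and the lower bounds $Y>\widetilde{S}_{k}$, and then conjugate back to $X$. Your bookkeeping of the conjugation step (in particular the flip of the $G_{k}^{-1}/\overline{G}_{k}^{-1}$ pattern and the identity $R_{k}=I_{n}-\overline{\widetilde{S}_{k}}$) is in fact more explicit than the paper's own write-up, which states $R_{k}=I_{n}-S_{k}^{\prime}$ without displaying the conjugate.
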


\begin{proof}
Since $A$ is nonsingular, by Lemma \ref{lm1}, equation (\ref{eq1})
is equivalent to equation (\ref{eq5}) with $Y=I-\overline{X}.$
Applying Theorem
\ref{th7} on equation (\ref{eq5}) gives $G_{k}>0,k\geq1$ and%
\begin{align*}
Y  &  >\left[
\begin{array}
[c]{c}%
\mathbf{0}\\
\overline{A}%
\end{array}
\right]  ^{*}G_{k}^{-1}\left[
\begin{array}
[c]{c}%
\mathbf{0}\\
\overline{A}%
\end{array}
\right]  \triangleq S_{k}^{\prime},\quad k\text{ is even,}\\
Y  &  >\left[
\begin{array}
[c]{c}%
\mathbf{0}\\
\overline{A}%
\end{array}
\right]  ^{*}\overline{G}_{k}^{-1}\left[
\begin{array}
[c]{c}%
\mathbf{0}\\
\overline{A}%
\end{array}
\right]  \triangleq S_{k}^{\prime},\quad k\text{ is odd,}%
\end{align*}
which are respectively equivalent to (\ref{eq37}) and (\ref{eq34})
by using $Y=I-\overline{X}.$ Finally, $R_{k}=I_{n}-S_{k}^{\prime}$
is non-increasing because $S_{k}^{\prime}$ is nondecreasing
according to Theorem \ref{th7}. The proof is finished.
\end{proof}

It is also clear that the larger the $k$, the better the $R_{k}$
gives an upper bound of $X.$ However, large $k$ may lead to
numerical problems. Choosing some special values in $k$ gives the
following corollary.

\begin{corollary}
If the nonlinear matrix equation in (\ref{eq1}) has a positive
definite
solution and $A$ is nonsingular, then $A$ satisfies%
\[
I_{n}>A^{*}A+\overline{AA^{*}}.
\]
Moreover, the solution $X$ satisfies the following inequalities%
\begin{align*}
X  &  <R_{1}=I_{n}-A^{*}A,\\
X  &  <R_{2}=I_{n}-A^{*}\left(  I_{n}-\overline{A^{*}%
A}\right)  ^{-1}A,\\
X  &  <R_{3}=I_{n}-A^{*}\left(  I_{n}-A^{\mathrm{T}}\left(
I_{n}-A^{*}A\right)  ^{-1}\overline{A}\right)  ^{-1}A.
\end{align*}

\end{corollary}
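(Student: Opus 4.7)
The plan is to obtain all four assertions as specializations of the preceding theorem to $k=1,2,3$, in exact parallel with the proof of the corresponding lower-bound corollary. The work splits into two independent pieces: (i) computing the three explicit bounds $R_1,R_2,R_3$ from the abstract formulas (\ref{eq37})--(\ref{eq34}), and (ii) extracting the scalar inequality $I_n>A^{*}A+\overline{AA^{*}}$ from the positive-definiteness of $G_3$.

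For (i), I would first unwind the recursion defining $G_k$ to write down, at $k=1,2,3$, the explicit block matrices
\[
G_1=I_n,\qquad G_2=\begin{bmatrix} I_n & A \\ A^{*} & I_n \end{bmatrix},\qquad G_3=\begin{bmatrix} I_n & A & \mathbf{0} \\ A^{*} & I_n & \overline{A} \\ \mathbf{0} & \overline{A}^{*} & I_n \end{bmatrix},
\]
with the understanding that the zero blocks are of zero dimension at $k=2$. Then, for each $k$, the quadratic form in (\ref{eq37})--(\ref{eq34}) picks out only the bottom-right $n\times n$ block of $G_k^{-1}$ (or of $\overline{G}_k^{-1}$ for even $k$). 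That block I would evaluate by a single Schur complement with respect to the top-left $(k-1)n\times(k-1)n$ block, reusing the $(k-1,k-1)$ entry of the inverse computed at the previous level. At $k=1$ this reads off $R_1=I_n-A^{*}A$ immediately; at $k=2$ the $(2,2)$ block of $\overline{G}_2^{-1}$ equals $(I_n-A^{T}\overline{A})^{-1}=(I_n-\overline{A^{*}A})^{-1}$, which yields $R_2$; at $k=3$ one nests one more Schur complement, using $(G_2^{-1})_{22}=(I_n-A^{*}A)^{-1}$, and obtains the stated expression for $R_3$.

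For (ii), the theorem asserts $G_k>0$ for all $k\ge1$; in particular $G_3>0$. A single Schur complement with respect to its top-left $I_n$ block reduces this to
\[
\begin{bmatrix} I_n-A^{*}A & \overline{A}\\ \overline{A}^{*} & I_n \end{bmatrix}>0,
\]
and a further Schur complement with respect to the bottom-right $I_n$ block gives $I_n-A^{*}A-\overline{A}\,\overline{A}^{*}>0$, which is the required inequality since $\overline{A}\,\overline{A}^{*}=\overline{AA^{*}}$.

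The only real obstacle is bookkeeping: keeping the parity of $k$ straight so as to invoke (\ref{eq37}) versus (\ref{eq34}) correctly, and tracking whether the border block at each level of the recursion carries $A$ or $\overline{A}$ (and hence which conjugation appears in each Schur-complement formula). Once that indexing is pinned down, every step is a routine Schur complement identical in structure to those carried out in the proof of the $S_k$ corollary, so no new idea is required.
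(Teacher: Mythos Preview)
The proposal is correct and follows essentially the same approach as the paper: the paper states only ``Choosing some special values in $k$ gives the following corollary'' without an explicit proof, but the intended argument is precisely the parallel of the $S_k$ corollary's proof that you outline---specialize the $R_k$ theorem to $k=1,2,3$, compute the relevant bottom-right blocks of $G_k^{-1}$ (or $\overline{G}_k^{-1}$) by nested Schur complements, and extract the inequality on $A$ from $G_3>0$. Your bookkeeping of the parity and the placement of $A$ versus $\overline{A}$ is accurate.
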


\section{\label{sec5}Sufficient Conditions and Necessary Conditions}

In this section, we present some necessary conditions and sufficient
conditions for the existence of a positive definite solutions of the
nonlinear matrix equation (\ref{eq1}).

\begin{theorem}
If the nonlinear matrix equation (\ref{eq1}) has a positive definite
solution, then $\rho \left(  A\overline{A}\right)  \leq
\frac{1}{4},\left \Vert
A\right \Vert <1$ and%
\begin{equation}
\rho \left(  \left(  A\pm \overline{A}^{*}\right)  \left(
\overline{A\pm \overline{A}^{*}}\right)  \right)  \leq1, \label{eq30}%
\end{equation}
which can be equivalently rewritten as $\mathrm{co}\rho \left(  A\pm
\overline{A}^{*}\right)  \leq1.$
\end{theorem}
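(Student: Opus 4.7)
The plan is to transfer the hypothesis to equation (\ref{eq2}) via Theorem \ref{th1}: if $X>0$ solves (\ref{eq1}) then $W=X^{\heartsuit}>0$ solves $W+B^{\mathrm{T}}W^{-1}B=I_{2n}$ with $B=A^{\lozenge}$. Most of the necessary conditions will be extracted from this real equation; only $\|A\|<1$ follows directly from (\ref{eq1}) in the original variables.

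\emph{Bound $\|A\|<1$.} Since $X>0$ we have $\overline{X}\leq I_{n}$, hence $\overline{X}^{-1}\geq I_{n}$, so $A^{*}A\leq A^{*}\overline{X}^{-1}A=I_{n}-X$. Taking spectral norms gives $\|A\|^{2}\leq 1-\lambda_{\min}(X)<1$.

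\emph{Bound $\rho(A\overline{A})\leq\tfrac{1}{4}$.} I would prove the sharper fact $\omega(B)\leq\tfrac{1}{2}$ by testing (\ref{eq2}) against a unit vector $v\in\mathbf{C}^{2n}$. Because $B$ is real, $v^{*}B^{\mathrm{T}}=(Bv)^{*}$, and (\ref{eq2}) becomes $\|W^{1/2}v\|^{2}+\|W^{-1/2}Bv\|^{2}=1$. One application of Cauchy--Schwarz followed by AM--GM then gives $|v^{*}Bv|\leq\|W^{-1/2}Bv\|\cdot\|W^{1/2}v\|\leq\tfrac{1}{2}$. Hence $\omega(A^{\lozenge})\leq\tfrac{1}{2}$, so $\rho(A^{\lozenge})\leq\tfrac{1}{2}$, and Lemma \ref{lm0}(4) yields $\rho(A\overline{A})=\rho(A^{\lozenge})^{2}\leq\tfrac{1}{4}$.

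\emph{Bound on $A\pm\overline{A}^{*}$ (the main obstacle).} The key is to complete the square inside (\ref{eq2}) in two different ways. The real completion gives
\[
(W^{1/2}\pm W^{-1/2}B)^{\mathrm{T}}(W^{1/2}\pm W^{-1/2}B)=I_{2n}\pm(B+B^{\mathrm{T}})\geq 0,
\]
forcing $\|B+B^{\mathrm{T}}\|\leq 1$. Inserting $\mathrm{j}$ exposes the antisymmetric part and uses that $\mathrm{j}(B-B^{\mathrm{T}})$ is Hermitian:
\[
(W^{1/2}\pm \mathrm{j}\,W^{-1/2}B)^{*}(W^{1/2}\pm \mathrm{j}\,W^{-1/2}B)=I_{2n}\pm \mathrm{j}(B-B^{\mathrm{T}})\geq 0,
\]
yielding $\|B-B^{\mathrm{T}}\|\leq 1$. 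A short calculation from Lemma \ref{lm0} gives the identity $A^{\lozenge}\pm(A^{\lozenge})^{\mathrm{T}}=(A\pm A^{\mathrm{T}})^{\lozenge}$, and Lemma \ref{lm0}(6) then yields $\|A\pm A^{\mathrm{T}}\|\leq 1$. Using $\overline{A}^{*}=A^{\mathrm{T}}$ this reads $\|A\pm\overline{A}^{*}\|\leq 1$. To recover the spectral form (\ref{eq30}), observe that $C:=A+\overline{A}^{*}$ is complex-symmetric, so $C\overline{C}=CC^{*}$ and $\rho(C\overline{C})=\|C\|^{2}\leq 1$; analogously $D:=A-\overline{A}^{*}$ is complex-antisymmetric, so $D\overline{D}=-DD^{*}$ and still $\rho(D\overline{D})=\|D\|^{2}\leq 1$. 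The equivalent formulation $\mathrm{co}\rho(A\pm\overline{A}^{*})\leq 1$ is then Lemma \ref{lm2}. The least obvious step is seeing that the complex completion is needed precisely to expose the antisymmetric part $B-B^{\mathrm{T}}$ that the real completion hides; this is where I expect most of the ingenuity to lie.
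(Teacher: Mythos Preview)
Your argument is correct and genuinely differs from the paper's. Both proofs begin by transferring to the real equation $W+B^{\mathrm{T}}W^{-1}B=I_{2n}$ with $B=A^{\lozenge}$, but then they diverge. The paper simply quotes Lemma~\ref{lm12} (known necessary conditions for the standard equation) to obtain $\rho(B)\le\tfrac12$, $\|B\|<1$, $\rho(B\pm B^{\mathrm{T}})\le1$, and then performs a block-diagonalisation via the unitary $P_{n}$ (using $P_{n}^{*}E_{n}P_{n}=E_{n}$) to rewrite $\rho\big(A^{\lozenge}\pm(A^{\lozenge})^{\mathrm{T}}\big)$ directly as $\rho^{1/2}\big((A\pm\overline{A}^{*})(\overline{A\pm\overline{A}^{*}})\big)$. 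You instead re-derive the necessary conditions from scratch (numerical radius via Cauchy--Schwarz/AM--GM, and the two completions of the square), then bypass the block computation entirely by using the identity $(A^{\lozenge})^{\mathrm{T}}=(A^{\mathrm{T}})^{\lozenge}$ together with Lemma~\ref{lm0}(6) to get $\|A\pm A^{\mathrm{T}}\|\le1$, and finally exploit the complex (anti)symmetry of $A\pm A^{\mathrm{T}}$ to turn the norm bound into $\rho\big((A\pm\overline{A}^{*})\overline{(A\pm\overline{A}^{*})}\big)=\|A\pm A^{\mathrm{T}}\|^{2}\le1$. Your route is more self-contained (it does not rely on Lemma~\ref{lm12} as a black box) and the final (anti)symmetry observation is a nice shortcut; the paper's route is quicker if one is willing to cite the literature and carries out the translation via an explicit similarity. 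Both arrive at the same conclusion, and the equivalence with $\mathrm{co}\rho(A\pm\overline{A}^{*})\le1$ is Lemma~\ref{lm2} in either case.
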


\begin{proof}
It follows from Theorem \ref{th1} and Lemma \ref{lm12} in appendix
that if the nonlinear matrix equation (\ref{eq1}) has a positive
definite solution, then $\rho \left(  A^{\lozenge}\right)  \leq
\frac{1}{2},\left \Vert A^{\lozenge }\right \Vert <1$ and
\begin{equation}
\rho \left(  A^{\lozenge}\pm \left(  A^{\lozenge}\right)
^{\mathrm{T}}\right)
\leq1. \label{eq28}%
\end{equation}
Clearly, by applying Lemma \ref{lm0}, $\rho \left(
A^{\lozenge}\right) \leq \frac{1}{2}$ is equivalent to $\rho \left(
A\overline{A}\right)  \leq \frac{1}{4}$ and $\left \Vert
A^{\lozenge}\right \Vert <1$ is equivalent to $\left \Vert A\right
\Vert <1$ (we point out that $\left \Vert A\right \Vert <1$ also
follows directly from (\ref{eq38}))$.$ We next show that
(\ref{eq28}) is equivalent to (\ref{eq30}). By virtue of Lemma
\ref{lm0} and in view of $P_{n}^{*}E_{n}P_{n}=E_{n}$, we get
\begin{align*}
\rho \left(  A^{\lozenge}\pm \left(  A^{\lozenge}\right)
^{\mathrm{T}}\right) &  =\rho \left(  E_{n}A^{\heartsuit}\pm \left(
A^{\heartsuit}\right)
^{\mathrm{T}}E_{n}\right) \\
&  =\rho \left(  E_{n}P_{n}\left[
\begin{array}
[c]{cc}%
A & 0\\
0 & \overline{A}%
\end{array}
\right]  P_{n}^{*}\pm P_{n}\left[
\begin{array}
[c]{cc}%
A^{*} & 0\\
0 & \overline{A}^{*}%
\end{array}
\right]  P_{n}^{*}E_{n}\right) \\
&  =\rho \left(  P_{n}^{*}E_{n}P_{n}\left[
\begin{array}
[c]{cc}%
A & 0\\
0 & \overline{A}%
\end{array}
\right]  \pm \left[
\begin{array}
[c]{cc}%
A^{*} & 0\\
0 & \overline{A}^{*}%
\end{array}
\right]  P_{n}^{*}E_{n}P_{n}\right) \\
&  =\rho \left(  E_{n}\left[
\begin{array}
[c]{cc}%
A & 0\\
0 & \overline{A}%
\end{array}
\right]  \pm \left[
\begin{array}
[c]{cc}%
A^{*} & 0\\
0 & \overline{A}^{*}%
\end{array}
\right]  E_{n}\right) \\
&  =\rho \left(  \left[
\begin{array}
[c]{cc}%
0 & \overline{A}\pm A^{*}\\
A\pm \overline{A}^{*} & 0
\end{array}
\right]  \right) \\
&  =\rho^{\frac{1}{2}}\left(  \left[
\begin{array}
[c]{cc}%
0 & \overline{A}\pm A^{*}\\
A\pm \overline{A}^{*} & 0
\end{array}
\right]  ^{2}\right) \\
&  =\rho^{\frac{1}{2}}\left(  \left[
\begin{array}
[c]{cc}%
\left(  \overline{A}\pm A^{*}\right)  \left(  A\pm \overline
{A}^{*}\right)  & 0\\
0 & \left(  A\pm \overline{A}^{*}\right)  \left( \overline{A}\pm
A^{*}\right)
\end{array}
\right]  \right) \\
&  =\rho^{\frac{1}{2}}\left(  \left(  A\pm \overline{A}^{*}\right)
\left(  \overline{A}\pm A^{*}\right)  \right)  ,
\end{align*}
which is\ the desired relation. Finally, the equivalence between
(\ref{eq30}) and $\mathrm{co}\rho \left(  A\pm
\overline{A}^{*}\right)  \leq1$ follows from Lemma \ref{lm2}. The
proof is finished.
\end{proof}

\begin{theorem}
The nonlinear matrix equation (\ref{eq1}) has a positive definite
solution provided
\begin{equation}
\left \Vert A\right \Vert \leq \frac{1}{2}. \label{eq39}%
\end{equation}
Moreover, if $A$ is con-normal, then the nonlinear matrix equation
(\ref{eq1}) has a positive definite solution if and only if $A$
satisfies (\ref{eq39}). In this case, the maximal solution is given
by
\begin{equation}
X_{+}=\frac{1}{2}\left(  I_{n}+\left(  I_{n}-4A^{*}A\right)
^{\frac{1}{2}}\right)  . \label{eq40}%
\end{equation}
If $A$ is further assumed to be nonsingular, then the minimal
solution can be
expressed as%
\begin{equation}
X_{-}=\frac{1}{2}\left(  I_{n}-\left(  I_{n}-4A^{*}A\right)
^{\frac{1}{2}}\right)  . \label{eq41}%
\end{equation}

\end{theorem}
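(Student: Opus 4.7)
The plan is to route both directions through Theorem \ref{th1}, which reduces (\ref{eq1}) to the standard equation (\ref{eq2}) with coefficient $B = A^{\lozenge}$, and to move information between $A$ and its real counterpart via Lemma \ref{lm0}. For the sufficient direction, Lemma \ref{lm0} item 6 gives $\|A^{\lozenge}\| = \|A\| \leq 1/2$. A classical sufficient condition for $W + B^{\mathrm{T}} W^{-1} B = I_{2n}$ to admit a positive definite solution is precisely $\|B\| \leq 1/2$ (the companion fact to the necessary conditions invoked through the appendix lemmas on the standard equation). Hence (\ref{eq2}) has a positive definite solution, and Theorem \ref{th1} transports it back to a positive definite solution of (\ref{eq1}).

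For necessity under con-normality, assume $A$ is con-normal and (\ref{eq1}) has a positive definite solution. Theorem \ref{th1} then produces a positive definite solution of (\ref{eq2}), while Lemma \ref{lm0} item 5 ensures that $A^{\lozenge}$ is real normal. For a real normal coefficient $B$, the standard equation admits a positive definite solution if and only if $\|B\| \leq 1/2$: normality gives $\omega(B) = \|B\|$, and the characterization of existence via the numerical radius already invoked in the corollary preceding this theorem then forces $\|B\| \leq 1/2$ (with a short simultaneous-diagonalization argument over $\mathbb{C}$ handling any singular case). Combined with Lemma \ref{lm0} item 6, this yields $\|A\| = \|A^{\lozenge}\| \leq 1/2$.

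For the explicit formulas, real normality of $A^{\lozenge}$ gives the extremal solutions of (\ref{eq2}) in closed form as $W_{\pm} = \tfrac{1}{2}\bigl(I_{2n} \pm (I_{2n} - 4(A^{\lozenge})^{\mathrm{T}} A^{\lozenge})^{1/2}\bigr)$, the minimal case requiring $A^{\lozenge}$ (equivalently $A$) nonsingular. Using $A^{\lozenge} = E_n A^{\heartsuit}$ with $E_n^2 = I_{2n}$ and $(A^{*})^{\heartsuit} = (A^{\heartsuit})^{\mathrm{T}}$ from Lemma \ref{lm0}, I compute $(A^{\lozenge})^{\mathrm{T}} A^{\lozenge} = (A^{\heartsuit})^{\mathrm{T}} A^{\heartsuit} = (A^{*}A)^{\heartsuit}$, so $I_{2n} - 4(A^{\lozenge})^{\mathrm{T}} A^{\lozenge} = (I_n - 4A^{*}A)^{\heartsuit}$. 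Item 7 of Lemma \ref{lm0} pulls the square root inside the $\heartsuit$, giving $W_{\pm} = \bigl(\tfrac{1}{2}(I_n \pm (I_n - 4A^{*}A)^{1/2})\bigr)^{\heartsuit}$; since Theorem \ref{th1} identifies $W_{\pm}$ with $X_{\pm}^{\heartsuit}$, formulas (\ref{eq40}) and (\ref{eq41}) drop out. The main obstacle is pinpointing and invoking the right classical facts about the standard equation with normal coefficient — namely that normality promotes $\|B\| \leq 1/2$ from sufficient to necessary-and-sufficient, with extremal solutions given by the closed form above — after which everything else is routine bookkeeping with the $\heartsuit$/$\lozenge$ calculus and an appeal to Theorem \ref{th1}.
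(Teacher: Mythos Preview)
Your proposal is correct and follows essentially the same route as the paper: reduce to (\ref{eq2}) via Theorem \ref{th1}, transfer $\|A\|\leq\tfrac12$ to $\|A^{\lozenge}\|\leq\tfrac12$ and con-normality of $A$ to normality of $A^{\lozenge}$ via Lemma \ref{lm0}, invoke the classical result on the standard equation (Lemma \ref{lm14} in the appendix), and pull the closed-form $W_{\pm}$ back through the $\heartsuit$ calculus using $(A^{\lozenge})^{\mathrm T}A^{\lozenge}=(A^{*}A)^{\heartsuit}$ and item 7. The only cosmetic difference is that for necessity under con-normality the paper simply cites Lemma \ref{lm14} directly, whereas you sketch an alternate justification via $\omega(B)=\|B\|$ and the numerical-radius criterion; both are valid, but since Lemma \ref{lm14} already covers the normal case without any invertibility hypothesis, citing it is cleaner than your detour through Lemma \ref{lm10} plus a separate treatment of the singular case.
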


\begin{proof}
If $\left \Vert A\right \Vert \leq \frac{1}{2},$ then by Lemma
\ref{lm0}, we know that $\left \Vert A^{\lozenge}\right \Vert \leq
\frac{1}{2},$ which,\ by using Lemma \ref{lm14}, indicates that
equation (\ref{eq2}) has positive definite solution. This is further
equivalent to the existence of positive definite solution of
equation (\ref{eq1}) in view of Theorem \ref{th1}. The case that $A$
is con-normal can be shown similarly. We next show (\ref{eq40}).
Notice that, according to Lemma \ref{lm14}, the maximal solution of
equation
(\ref{eq2}) is given as%
\begin{align*}
W_{+}  &  =\frac{1}{2}\left(  I_{2n}+\left(  I_{2n}-4\left(
A^{\lozenge
}\right)  ^{\mathrm{T}}A^{\lozenge}\right)  ^{\frac{1}{2}}\right) \\
&  =\frac{1}{2}\left(  I_{2n}+\left(  I_{2n}-4\left(
A^{\heartsuit}\right)
^{\mathrm{T}}E_{n}^{\mathrm{T}}E_{n}A^{\heartsuit}\right)  ^{\frac{1}{2}%
}\right) \\
&  =\frac{1}{2}\left(  I_{2n}+\left(  I_{2n}-4\left(
A^{\heartsuit}\right)
^{\mathrm{T}}A^{\heartsuit}\right)  ^{\frac{1}{2}}\right) \\
&  =\frac{1}{2}\left(  I_{2n}+\left(  I_{2n}-4\left( A^{*}A\right)
^{\heartsuit}\right)  ^{\frac{1}{2}}\right) \\
&  =\frac{1}{2}\left(  I_{n}+\left(  I_{n}-4A^{*}A\right) ^{\frac
{1}{2}}\right)  ^{\heartsuit},
\end{align*}
which, according to Theorem \ref{th1}, implies (\ref{eq40}). The
equation (\ref{eq41}) can be shown similarly. The proof is done.
\end{proof}

\section{\label{sec6}Conclusion}

This paper has studied the existence of a positive definite
solutions to the nonlinear matrix equation
$X+A^{*}\overline{X}^{-1}A=Q$. With the help of some operators
associated with complex matrices, we have shown that the existence
of a positive definite solution of this type of nonlinear matrix
equation is equivalent to the existence of a positive definite
solution of a nonlinear matrix equation in the form of
$W+B^{\mathrm{T}}W^{-1}B=I$ where $B$ is real and is determined by
$A$. Since the later nonlinear matrix equation has been well studied
in the literature, properties of the original nonlinear matrix
equations can be established based on the existing results on the
transformed nonlinear matrix equations. Moreover, with the help of
Schur complement, we have shown in this paper some upper bounds and
lower bounds on the solutions to the nonlinear matrix equations.
Simultaneously, some easily tested sufficient conditions and
necessary conditions for the existence of positive definite solution
of the nonlinear equations have also been established. We point out
that, by combining the results obtained in this paper and the
existing results on numerical computation of solutions to the
standard nonlinear matrix equation $W+B^{\mathrm{T}}W^{-1}B=I$,
numerical reliable algorithms can be built for computing the
positive definite solutions to the original nonlinear matrix
equation, which is currently under study.

\section*{Appendix}

\subsection*{A: Solutions of Matrix Equation $X+A^{*}X^{-1}A=Q$}

In this subsection, we recall some basic results regarding positive
definite
solutions of the following matrix equation%
\begin{equation}
X+A^{*}X^{-1}A=Q. \label{EqNonlinear}%
\end{equation}

\begin{lemma}
\label{lm8}(Theorem 3.4 in \cite{err93laa}) Suppose that $Q>0$ and
assume that he nonlinear matrix equation (\ref{EqNonlinear}) has a
positive definite solution. Then it has a maximal and minimal
solution $X_{+}$ and $X_{-},$ respectively. Moreover, $X_{+}$ is the
unique solution for which $X+\lambda A$ is invertible for all $\left
\vert \lambda \right \vert <1$, while $X_{-}$ is the unique solution
for which $X+\lambda A^{*}$ is invertible for all $\left \vert
\lambda \right \vert >1.$
\end{lemma}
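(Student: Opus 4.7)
The plan is to first establish existence of both extremal solutions by monotone iterations, and then to characterize them through an explicit factorization of the Laurent polynomial matrix $\Phi(\lambda)=Q-\lambda A-\lambda^{-1}A^{*}$. For $X_+$, consider the iteration $X_{k+1}=Q-A^{*}X_{k}^{-1}A$ with $X_{0}=Q$. The map $f(X)=Q-A^{*}X^{-1}A$ is monotone nondecreasing on positive-definite matrices (since $X\le Y\Rightarrow X^{-1}\ge Y^{-1}$), and $X_{1}=Q-A^{*}Q^{-1}A\le Q=X_{0}$, so by induction $X_{k+1}\le X_{k}$. If $\tilde{X}>0$ is any solution, then $\tilde{X}\le Q=X_{0}$, and $\tilde{X}=f(\tilde{X})\le f(X_{k})=X_{k+1}$ inductively, so the sequence is bounded below by $\tilde{X}$. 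Passing to the limit gives $X_{+}:=\lim_{k\to\infty}X_{k}\ge\tilde{X}$, which solves the equation and is therefore the maximal positive-definite solution. For $X_{-}$, use the substitution $Y=Q-X>0$: when $A$ is invertible, $Y=A^{*}X^{-1}A$ yields $X=AY^{-1}A^{*}$ and the companion equation $Y+AY^{-1}A^{*}=Q$ of the same type; applying the preceding argument gives its maximum $Y_{+}$, and $X_{-}:=Q-Y_{+}$ is the minimal solution of the original. The singular-$A$ case follows by perturbing $A$ to $A_{\varepsilon}=A+\varepsilon I$ and letting $\varepsilon\downarrow 0$.

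Next I would establish the key factorization identity, valid for every positive-definite solution $X$ of (\ref{EqNonlinear}):
\[
Q-\lambda A-\lambda^{-1}A^{*}=(X-\lambda^{-1}A^{*})\,X^{-1}\,(X-\lambda A),
\]
which is verified by expanding the right-hand side and using $A^{*}X^{-1}A=Q-X$. Taking determinants and clearing the pole at $\lambda=0$ yields a polynomial in $\lambda$ of degree $2n$ whose roots come in reciprocal pairs (reflecting the symmetry $\Phi(\lambda)^{*}=\Phi(\bar\lambda^{-1})$ on the unit circle). Each positive-definite solution corresponds to a choice of how to partition these $2n$ roots between the factors $\det(X-\lambda A)$ and $\det(X-\lambda^{-1}A^{*})$. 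The claim is that $X_{+}$ is the solution for which $\det(X_{+}-\lambda A)$ absorbs exactly the roots with $|\lambda|\ge 1$, equivalently $X_{+}-\lambda A$ is invertible for all $|\lambda|<1$; replacing $A$ by $-A$ recovers the statement in the lemma. Symmetrically, $X_{-}$ is characterized by the complementary (inner) partition, giving the assertion for $|\lambda|>1$.

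The main obstacle is verifying both that the limit $X_{+}$ of the iteration actually realizes the outer factorization and that the invertibility property uniquely determines it. For the first, one shows inductively that $\rho(X_{k}^{-1}A)\le 1$ along the iteration, hence $\rho(X_{+}^{-1}A)\le 1$, which gives the required open-disk invertibility. For uniqueness, if $X_{1}$ and $X_{2}$ both solve the equation and each satisfies the open-disk invertibility property, subtracting the defining equations produces the Stein-type identity
\[
X_{1}-X_{2}=A^{*}X_{2}^{-1}(X_{1}-X_{2})X_{1}^{-1}A,
\]
which iterates to $X_{1}-X_{2}=(A^{*}X_{2}^{-1})^{k}(X_{1}-X_{2})(X_{1}^{-1}A)^{k}$ for every $k\ge 1$; Gelfand's formula together with the spectral radius bounds (the open-disk invertibility excludes unit-modulus eigenvalues in the relevant direction) forces $X_{1}=X_{2}$. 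The analogous argument under $\lambda\mapsto\lambda^{-1}$ handles $X_{-}$. This last step is effectively the canonical factorization uniqueness theorem for self-adjoint rational matrix functions and is the technical crux of the lemma.
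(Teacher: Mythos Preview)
The paper does not prove this lemma at all: it is quoted as Theorem~3.4 of \cite{err93laa} in Appendix~A and used as an off-the-shelf preliminary. So there is no in-paper argument to compare against; your sketch is in fact an outline of the original Engwerda--Ran--Rijkeboer proof (monotone iteration for existence, canonical factorization of $\Phi(\lambda)=Q-\lambda A-\lambda^{-1}A^{*}$ for the spectral characterization).

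Two places in your sketch are genuine gaps rather than routine bookkeeping. First, the claim that ``one shows inductively that $\rho(X_{k}^{-1}A)\le 1$ along the iteration'' is not an inductive invariant in any obvious way: at $k=0$ it would require $\rho(Q^{-1}A)\le 1$, which is not assumed, and the step $X_{k}\mapsto X_{k+1}$ does not visibly propagate such a bound. The standard route is the reverse implication: once you know $X_{+}$ is maximal, you deduce the spectral property from the factorization by an ordering argument, not the other way around. Second, your uniqueness argument via the Stein identity and Gelfand's formula only works when both $\rho(X_{1}^{-1}A)<1$ and $\rho(X_{2}^{-1}A)<1$; the hypothesis ``$X+\lambda A$ invertible for all $|\lambda|<1$'' allows $\rho(X^{-1}A)=1$, in which case $\|(X_{1}^{-1}A)^{k}\|\cdot\|(A^{*}X_{2}^{-1})^{k}\|$ need not tend to zero. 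Engwerda--Ran--Rijkeboer handle this boundary case through the canonical (spectral) factorization theory for self-adjoint rational matrix functions, which you correctly flag as the crux but do not actually resolve. The perturbation $A\to A+\varepsilon I$ for singular $A$ is also delicate: one must verify that the perturbed equation still has a positive definite solution for small $\varepsilon$, and that the limit of the minimal solutions remains positive definite and minimal, none of which is automatic.
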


\begin{lemma}
\label{lm9}(Algorithm 4.1 in \cite{err93laa}) Suppose that $Q=I.$ If
the nonlinear matrix equation (\ref{EqNonlinear}) has a positive
definite
solution, then the iteration%
\[
X_{k+1}=I_{n}-A^{*}X_{k}^{-1}A,\quad X_{0}=I_{n},
\]
converges to the maximal solution $X_{+},$ namely,
$\lim_{k\rightarrow \infty }X_{k}=X_{+}.$
\end{lemma}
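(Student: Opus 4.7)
The plan is to use monotone convergence in the Loewner order. First I would show that the iterates are all well defined (positive definite), that they are bounded below by $X_+$, and that they are monotonically decreasing; then I would pass to the limit and identify it with $X_+$ using the maximality property granted by Lemma \ref{lm8}.

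The key tool is that the fixed-point map $f(X) = I_n - A^{*}X^{-1}A$ is operator monotone nondecreasing on the positive-definite cone: if $0 < X \leq Y$ then $X^{-1} \geq Y^{-1}$, and hence $f(X) \leq f(Y)$. Under the standing hypothesis, Lemma \ref{lm8} yields a maximal positive definite solution $X_+$, which automatically satisfies $X_+ = I_n - A^{*}X_+^{-1}A \leq I_n = X_0$. I would then run two inductions in parallel. The first shows $X_k \geq X_+$ (and consequently $X_k > 0$) for all $k$: given $X_k \geq X_+ > 0$, monotonicity of $f$ gives $X_{k+1} = f(X_k) \geq f(X_+) = X_+$. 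The second shows $X_{k+1} \leq X_k$: the base case is $X_1 = I_n - A^{*}A \leq I_n = X_0$, and the inductive step $X_{k+1} \leq X_k$ yields $X_{k+2} = f(X_{k+1}) \leq f(X_k) = X_{k+1}$ again by monotonicity of $f$.

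With both inductions in hand, $\{X_k\}$ is monotonically decreasing in the Loewner order and uniformly bounded below by $X_+ > 0$. Standard monotone convergence for Hermitian matrices (applied entrywise, or via the monotone bounded sequences $x^{*}X_k x$ for each fixed $x$) then produces a Hermitian limit $X_\infty$ with $X_+ \leq X_\infty \leq I_n$. Since $X_\infty \geq X_+ > 0$ it is positive definite, so $X_\infty^{-1}$ is well defined and the recursion passes to the limit to give $X_\infty = I_n - A^{*}X_\infty^{-1}A$. Hence $X_\infty$ is itself a positive definite solution, and maximality of $X_+$ forces $X_\infty \leq X_+$; combined with $X_\infty \geq X_+$ this yields $X_\infty = X_+$, as desired.

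The only subtle point I anticipate is justifying that inversion is continuous at the limit, and this is precisely why the lower bound $X_k \geq X_+ > 0$ is important: it keeps the iterates uniformly bounded away from singularity, so $X_k^{-1} \to X_\infty^{-1}$ is legitimate and the limit of the recursion is indeed $I_n - A^{*}X_\infty^{-1}A$. Beyond this observation, the argument is a routine application of operator monotonicity of $f$ combined with the maximality characterization supplied by Lemma \ref{lm8}, so no serious obstacle is expected.
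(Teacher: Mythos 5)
Your argument is correct and complete: the two inductions (lower bound $X_k\geq X_+>0$ and monotone decrease $X_{k+1}\leq X_k$, both driven by the operator monotonicity of $f(X)=I_n-A^{*}X^{-1}A$), followed by monotone convergence and identification of the limit via the maximality of $X_+$ from Lemma \ref{lm8}, is exactly the standard proof of this result. The paper itself offers no proof of this lemma --- it is quoted verbatim as Algorithm 4.1 of \cite{err93laa} --- and your reasoning reconstructs the argument underlying that cited result, including the one genuinely delicate point (the uniform lower bound $X_k\geq X_+>0$ that legitimizes passing $X_k^{-1}\to X_\infty^{-1}$ to the limit). No gaps.
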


\begin{lemma}
\label{lm11}(Theorem 8.1 in \cite{err93laa}) Suppose that $Q=I$ and
$A$ is real. If the nonlinear matrix equation (\ref{EqNonlinear})
has a positive definite solution, then $X_{+}$ is real. Furthermore,
if $A$ is nonsingular, then $X_{-}$ is also real.
\end{lemma}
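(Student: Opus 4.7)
The plan is to prove the reality of $X_+$ directly from the iteration in Lemma \ref{lm9}, and then obtain the reality of $X_-$ by a reduction that maps the minimal solution of \eqref{EqNonlinear} bijectively to the maximal solution of a companion equation of the same form.

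For $X_+$, I would argue by induction on the iteration
\[
X_{k+1} = I_n - A^{*} X_k^{-1} A, \qquad X_0 = I_n,
\]
which, by Lemma \ref{lm9}, converges to $X_+$. Since $A$ is real, $A^{*} = A^{\mathrm{T}}$ is real as well. The base case $X_0 = I_n$ is real; and if $X_k$ is real (and positive definite, as guaranteed by the lemma), then $A^{\mathrm{T}} X_k^{-1} A$ is real, so $X_{k+1}$ is real. Realness is preserved by taking limits entrywise, so $X_+ = \lim_{k\to\infty} X_k$ is real.

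For $X_-$ under the nonsingularity hypothesis, I would first establish an analogue of Lemma \ref{lm1} for equation \eqref{EqNonlinear}: $X>0$ solves $X + A^{*} X^{-1} A = I_n$ if and only if $Y = I_n - X$ solves $Y + A Y^{-1} A^{*} = I_n$. This follows by rewriting $A^{*}X^{-1}A = I_n - X$, inverting to obtain $X = A(I_n - X)^{-1}A^{*}$, and substituting $Y = I_n - X$. Because $X \mapsto I_n - X$ reverses the Loewner order, this bijection sends the minimal solution $X_-$ to the maximal positive definite solution $Y_+ = I_n - X_-$ of the companion equation. The companion equation has exactly the same shape as \eqref{EqNonlinear} but with the real matrix $A^{*}$ in place of $A$; applying the first part of the proof to it yields that $Y_+$ is real, and therefore $X_- = I_n - Y_+$ is real.

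The main obstacle, modest as it is, is verifying the order-reversing bijection cleanly: one must check that positive definite solutions are indeed exchanged under $X \leftrightarrow I_n - X$ (the constraint $0 < X < I_n$ is automatic from $X + A^*X^{-1}A = I_n$ with $A \neq 0$), and that the minimum is mapped to the maximum. Once these are in place, the two halves of the statement follow from essentially the same induction.
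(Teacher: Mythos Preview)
The paper does not actually prove Lemma \ref{lm11}; it merely cites it as Theorem 8.1 in \cite{err93laa}. Your argument is correct, and both halves --- the induction on the iteration of Lemma \ref{lm9} for $X_{+}$, and the order-reversing bijection $X \mapsto I_{n}-X$ to a companion equation for $X_{-}$ --- are precisely the techniques the paper itself deploys in the analogous settings (the proof of Corollary \ref{coro1} for the former, and Lemma \ref{lm1} together with Item 2 of Theorem \ref{th1} for the latter), so your approach is entirely in the spirit of the paper.
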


\begin{lemma}
\label{lm10}(Theorem 5.1 in \cite{err93laa}) Suppose that $A$ is
invertible. Then the nonlinear matrix equation (\ref{EqNonlinear})
has a positive definite solution if and only if $\omega \left(
A\right)  \leq \frac{1}{2}.$
\end{lemma}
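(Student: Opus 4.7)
The plan is to handle necessity and sufficiency separately, since they rely on rather different techniques. For the necessity direction, I would use a short Cauchy--Schwarz plus AM--GM argument. Suppose $X>0$ solves $X+A^{*}X^{-1}A=I$. For any unit vector $x$, Cauchy--Schwarz applied to the pair $u=X^{1/2}x$, $v=X^{-1/2}Ax$ yields
\[
|x^{*}Ax|^{2}=|u^{*}v|^{2}\leq(x^{*}Xx)\bigl(x^{*}A^{*}X^{-1}Ax\bigr).
\]
The two factors on the right are nonnegative and sum to $x^{*}(X+A^{*}X^{-1}A)x=1$, so AM--GM bounds their product by $1/4$. Hence $|x^{*}Ax|\leq 1/2$ for every unit vector, giving $\omega(A)\leq 1/2$. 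This step is routine and requires no invertibility of $A$.

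For sufficiency, the approach I would take is spectral factorization on the unit circle. Introduce the Laurent matrix polynomial $\psi(z)=z^{-1}A^{*}+I+zA$; on $|z|=1$ it equals $I+2\operatorname{Re}(e^{\mathrm{i}\theta}A)$. Using the identity $\omega(A)=\max_{\theta}\lambda_{\max}(\operatorname{Re}(e^{\mathrm{i}\theta}A))$, the hypothesis $\omega(A)\leq 1/2$ is exactly equivalent to $\psi(e^{\mathrm{i}\theta})\geq 0$ for all $\theta$. I would then invoke the matrix Fej\'er--Riesz / Wiener--Hopf spectral factorization theorem to write
\[
\psi(z)=M(z)^{*}M(z),\qquad M(z)=M_{0}+zM_{1},
\]
with the factorization analytic and invertible inside the unit disk. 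Matching coefficients forces $M_{0}^{*}M_{0}+M_{1}^{*}M_{1}=I$ and $M_{0}^{*}M_{1}=A$; choosing the canonical Hermitian outer factor $M_{0}=X^{1/2}>0$ yields $M_{1}=X^{-1/2}A$, and the coefficient identities collapse precisely to $X+A^{*}X^{-1}A=I$. Thus a positive definite solution exists.

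The main obstacle will be the boundary case $\omega(A)=1/2$, where $\psi$ may touch zero on the unit circle and the standard factorization theorem gives only a semidefinite factor. Here I would regularize: set $A_{\varepsilon}=(1-\varepsilon)A$, so $\omega(A_{\varepsilon})<1/2$, obtain positive definite solutions $X_{\varepsilon}$ of $X_{\varepsilon}+A_{\varepsilon}^{*}X_{\varepsilon}^{-1}A_{\varepsilon}=I$ by the strict case, and then pass to the limit as $\varepsilon\downarrow 0$. Compactness on $[0,I]$ gives a limit $X\geq 0$ satisfying the equation; the invertibility of $A$ is then used to rule out $X$ being singular, since $A^{*}X^{-1}A$ would blow up on $\ker X$ and violate the equation. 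This limiting step, rather than the factorization itself, is where the analytical care is needed.
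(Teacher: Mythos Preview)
The paper does not actually prove this lemma; it is quoted without proof in Appendix~A as Theorem~5.1 of Engwerda--Ran--Rijkeboer \cite{err93laa}. So there is no ``paper's own proof'' to compare against here.

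Your sketch is correct and, for the sufficiency direction, is essentially the argument of the original source \cite{err93laa}, which is built around factorizations of the Laurent polynomial $\psi(\lambda)=\lambda A+I+\lambda^{-1}A^{*}$ on the unit circle. Your necessity argument via Cauchy--Schwarz and AM--GM is clean and requires no special machinery. For sufficiency, the one place that deserves a sharper formulation is the limiting step in the boundary case $\omega(A)=\tfrac12$. The informal claim that ``$A^{*}X^{-1}A$ would blow up on $\ker X$'' is not quite an argument, since $X^{-1}$ does not exist if $X$ is singular. The clean way to finish is: from $A_{\varepsilon}^{*}X_{\varepsilon}^{-1}A_{\varepsilon}=I-X_{\varepsilon}\leq I$ and the invertibility of $A$ (hence of $A_{\varepsilon}$ for small $\varepsilon$, with uniformly bounded inverse) you get
\[
X_{\varepsilon}^{-1}=A_{\varepsilon}^{-*}(I-X_{\varepsilon})A_{\varepsilon}^{-1}\leq C\,I
\]
for some constant $C$ independent of $\varepsilon$. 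This gives a uniform lower bound $X_{\varepsilon}\geq C^{-1}I>0$, so any subsequential limit $X$ is positive definite, $X_{\varepsilon}^{-1}\to X^{-1}$, and the equation passes to the limit by continuity. With that refinement the proof is complete.
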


\begin{lemma}
\label{lm12}(Theorem 7 in \cite{e93laa} and Theorem 3.1 in
\cite{zx96laa}) If the nonlinear matrix equation (\ref{EqNonlinear})
has a positive definite solution, then $\rho \left(  A\right)  \leq
\frac{1}{2},\left \Vert A\right \Vert <1$ and $\rho \left(  A\pm
A^{*}\right)  \leq1.$
\end{lemma}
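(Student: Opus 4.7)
The plan is to reduce the problem to the analogous necessary conditions for the standard equation $W + B^{\mathrm{T}}W^{-1}B = I$ via Theorem \ref{th1}, and then translate each spectral bound on $B = A^{\lozenge}$ back into a condition on $A$ using the calculus of the $(\cdot)^{\heartsuit}$ and $(\cdot)^{\lozenge}$ operators collected in Lemma \ref{lm0}.

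More precisely, first I would invoke Theorem \ref{th1}: if (\ref{eq1}) admits a positive definite solution, then so does the real equation (\ref{eq2}), which is exactly the standard form (\ref{EqNonlinear}) with $Q = I_{2n}$ and $B = A^{\lozenge}$. Lemma \ref{lm12} applied to (\ref{eq2}) immediately yields the three necessary conditions $\rho(A^{\lozenge}) \leq \frac{1}{2}$, $\|A^{\lozenge}\| < 1$, and $\rho(A^{\lozenge} \pm (A^{\lozenge})^{\mathrm{T}}) \leq 1$. The first two translate directly by Lemma \ref{lm0}: item 4 gives $\rho(A^{\lozenge}) = \rho^{1/2}(A\overline{A})$, so $\rho(A^{\lozenge}) \leq \frac{1}{2}$ is equivalent to $\rho(A\overline{A}) \leq \frac{1}{4}$; item 6 gives $\|A^{\lozenge}\| = \|A\|$, so $\|A^{\lozenge}\| < 1$ is equivalent to $\|A\| < 1$ (and this already follows independently from the earlier bound (\ref{eq38})).

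The main obstacle, and the only genuine computation, is translating the third condition $\rho(A^{\lozenge} \pm (A^{\lozenge})^{\mathrm{T}}) \leq 1$. For this I would use $A^{\lozenge} = E_n A^{\heartsuit}$ together with the diagonalization (\ref{teq65}) $A^{\heartsuit} = P_n \mathrm{diag}(A,\overline{A}) P_n^*$ and the identity $P_n^* E_n P_n = E_n$ (which is precisely the trick already used in the proof of Theorem \ref{th2}). A direct block computation then yields
\[
A^{\lozenge} \pm (A^{\lozenge})^{\mathrm{T}} \;=\; P_n \left[\begin{array}{cc} 0 & \overline{A} \pm A^{*} \\ A \pm \overline{A}^{*} & 0 \end{array}\right] P_n^{*},
\]
so its spectrum coincides with that of the inner block anti-diagonal matrix. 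Squaring produces a block-diagonal matrix whose two diagonal blocks are $(\overline{A} \pm A^{*})(A \pm \overline{A}^{*})$ and $(A \pm \overline{A}^{*})(\overline{A} \pm A^{*})$; these two blocks have identical nonzero spectra, so taking the square root of the spectral radius delivers exactly (\ref{eq30}).

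Finally, the equivalence between (\ref{eq30}) and $\mathrm{co}\rho(A \pm \overline{A}^{*}) \leq 1$ is immediate from Lemma \ref{lm2} applied to the matrix $A \pm \overline{A}^{*}$, since $\overline{A \pm \overline{A}^{*}} = \overline{A} \pm A^{*}$. The only nontrivial step is the block manipulation described above; everything else is bookkeeping with the operator identities in Lemma \ref{lm0}.
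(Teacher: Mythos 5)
You have proved the wrong statement. Lemma \ref{lm12} concerns the classical equation (\ref{EqNonlinear}), $X+A^{*}X^{-1}A=Q$, and asserts $\rho\left(A\right)\leq\frac{1}{2}$, $\left\Vert A\right\Vert<1$ and $\rho\left(A\pm A^{*}\right)\leq1$; the paper does not prove it but simply recalls it from Theorem 7 of \cite{e93laa} and Theorem 3.1 of \cite{zx96laa}. What you have written is instead a proof of the first theorem of Section \ref{sec5} --- the necessary conditions $\rho\left(A\overline{A}\right)\leq\frac{1}{4}$, $\left\Vert A\right\Vert<1$ and (\ref{eq30}) for the conjugate equation (\ref{eq1}) --- and indeed your argument (reduce to (\ref{eq2}) via Theorem \ref{th1}, apply the known necessary conditions to $B=A^{\lozenge}$, translate back using Lemma \ref{lm0}, the identity $P_{n}^{*}E_{n}P_{n}=E_{n}$ and Lemma \ref{lm2}) reproduces the paper's proof of that theorem almost verbatim. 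As a proof of Lemma \ref{lm12} it is circular: your second paragraph explicitly invokes ``Lemma \ref{lm12} applied to (\ref{eq2})'', i.e.\ you assume the very statement you were asked to establish, and your final conclusions are about $A\overline{A}$ and $A\pm\overline{A}^{*}$ rather than about $A$ and $A\pm A^{*}$.

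A self-contained proof of Lemma \ref{lm12} has nothing to do with the $\left(\cdot\right)^{\heartsuit}$, $\left(\cdot\right)^{\lozenge}$ calculus. With $Q=I_{n}$ (as in the cited sources), if $X>0$ solves (\ref{EqNonlinear}) then for any unit vector $x$ one has $x^{*}Xx+x^{*}A^{*}X^{-1}Ax=1$, and the Cauchy--Schwarz estimate $\left\vert x^{*}Ax\right\vert^{2}\leq\left(x^{*}Xx\right)\left(x^{*}A^{*}X^{-1}Ax\right)$ together with the arithmetic--geometric mean inequality gives $\omega\left(A\right)\leq\frac{1}{2}$ (compare Lemma \ref{lm10}); this yields $\rho\left(A\right)\leq\omega\left(A\right)\leq\frac{1}{2}$ and, since $A\pm A^{*}$ are normal, $\rho\left(A\pm A^{*}\right)=\omega\left(A\pm A^{*}\right)\leq2\omega\left(A\right)\leq1$. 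The bound $\left\Vert A\right\Vert<1$ follows from $0<X\leq I_{n}$, which gives $A^{*}A\leq A^{*}X^{-1}A=I_{n}-X<I_{n}$. None of these steps appears in your proposal.
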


\begin{lemma}
\label{lm14}(Theorem 11 and Theorem 13 in \cite{e93laa}) The
nonlinear matrix equation (\ref{EqNonlinear}) has a positive
definite solution provided $\left \Vert A\right \Vert \leq
\frac{1}{2}.$ Moreover, if $A$ is normal, then the nonlinear matrix
equation (\ref{EqNonlinear}) has a positive definite solution if and
only if $\left \Vert A\right \Vert \leq \frac{1}{2}.$ In this
case, the maximal solution is given by%
\[
X_{+}=\frac{1}{2}\left(  I_{n}+\left(  I_{n}-4A^{\mathrm{T}}A\right)
^{\frac{1}{2}}\right)  .
\]
Furthermore, if $A$ is nonsingular, then%
\[
X_{-}=\frac{1}{2}\left(  I_{n}-\left(  I_{n}-4A^{\mathrm{T}}A\right)
^{\frac{1}{2}}\right)  .
\]

\end{lemma}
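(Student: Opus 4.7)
The plan is to establish the three assertions of Lemma \ref{lm14} by combining a monotone Loewner-order iteration, a commutation-with-$A$ argument, and a matrix quadratic derivation. Throughout I write $F(X)=I_{n}-A^{*}X^{-1}A$, so that positive definite solutions of (\ref{EqNonlinear}) with $Q=I_{n}$ are exactly the positive definite fixed points of $F$.

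For the sufficient condition under $\left\Vert A\right\Vert \leq 1/2$, I would analyze the iteration $X_{k+1}=F(X_{k})$ with $X_{0}=I_{n}$ from Lemma \ref{lm9}. An easy induction gives $X_{k+1}\leq X_{k}\leq I_{n}$: the base case is $X_{1}=I_{n}-A^{*}A\leq I_{n}$, and $X_{k}\leq X_{k-1}\Rightarrow X_{k}^{-1}\geq X_{k-1}^{-1}\Rightarrow A^{*}X_{k}^{-1}A\geq A^{*}X_{k-1}^{-1}A\Rightarrow X_{k+1}\leq X_{k}$. The delicate step is a uniform positive lower bound to keep the limit strictly positive definite; the right model is the scalar recursion $t_{k+1}=1-\left\Vert A\right\Vert^{2}/t_{k}$, whose larger fixed point $\tau_{+}=\tfrac{1}{2}(1+\sqrt{1-4\left\Vert A\right\Vert^{2}})$ is real precisely because $\left\Vert A\right\Vert\leq 1/2$. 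Using $A^{*}A\leq\left\Vert A\right\Vert^{2}I_{n}=\tau_{+}(1-\tau_{+})I_{n}$, one verifies by induction that $X_{k}\geq\tau_{+}I_{n}$, so the monotone decreasing bounded sequence converges to some $X_{+}\geq\tau_{+}I_{n}>0$ which, by continuity of $F$ on the cone of positive definite matrices, solves (\ref{EqNonlinear}).

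For the converse direction under the additional hypothesis that $A$ is normal, I would show that every iterate $X_{k}$ commutes with $A$. The base case $X_{0}=I_{n}$ is trivial; if $X_{k}$ lies in the unital algebra generated by $A$ and $A^{*}$, then so does $X_{k}^{-1}$, and $A^{*}X_{k}^{-1}A=A^{*}AX_{k}^{-1}$ stays in this algebra (which is commutative by normality), so $X_{k+1}$ does too. Passing to the limit, $X_{+}$ commutes with $A$, and the defining equation rearranges to the matrix quadratic
\[
A^{*}A=X_{+}(I_{n}-X_{+}).
\]
Since $0<X_{+}\leq I_{n}$, the spectral mapping theorem applied to $\mu\mapsto\mu(1-\mu)$ on $(0,1]$ yields $\lambda(A^{*}A)\subset[0,1/4]$, whence $\left\Vert A\right\Vert=\left\Vert A^{*}A\right\Vert^{1/2}\leq 1/2$.

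For the closed-form expressions, the matrix quadratic $X_{+}^{2}-X_{+}+A^{*}A=0$ combined with $X_{+}\geq\tau_{+}I_{n}\geq I_{n}/2$ (the distinguishing feature of the larger root, carried over from the lower bound of Step 1) yields $X_{+}=\tfrac{1}{2}(I_{n}+(I_{n}-4A^{*}A)^{1/2})$, the square root being well-defined because $I_{n}-4A^{*}A\geq 0$. When $A$ is additionally nonsingular, $A^{*}A>0$, so $(I_{n}-4A^{*}A)^{1/2}<I_{n}$ strictly, and $\tfrac{1}{2}(I_{n}-(I_{n}-4A^{*}A)^{1/2})$ is strictly positive definite, satisfies the same matrix quadratic, and is therefore a second positive definite fixed point of $F$ lying below $X_{+}$; that this second root is in fact the minimal solution $X_{-}$ follows from the uniqueness characterization in Lemma \ref{lm8} (invertibility of $X+\lambda A^{*}$ for $|\lambda|>1$), which one checks using the spectral theorem on the normal matrix $A$. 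I expect the main obstacle to be the uniform lower bound in the first step: it is tempting to argue eigenvalue-by-eigenvalue, but this fails for non-normal $A$, so one must use the scalar majorant driven by $\left\Vert A\right\Vert$.
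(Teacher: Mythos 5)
The paper offers no proof of this lemma at all: it is quoted verbatim from Theorems 11 and 13 of \cite{e93laa}, so there is nothing internal to compare against. Your argument is, in substance, a correct reconstruction of the standard (Engwerda) proofs. The monotone iteration $X_{k+1}=I_{n}-A^{*}X_{k}^{-1}A$ together with the scalar majorant $\tau_{+}=\tfrac{1}{2}\bigl(1+\sqrt{1-4\left\Vert A\right\Vert^{2}}\bigr)$, which gives the uniform bound $X_{k}\geq\tau_{+}I_{n}>0$ via $A^{*}A\leq\tau_{+}(1-\tau_{+})I_{n}$, is exactly the right device for sufficiency; the commutative-algebra induction for normal $A$ (with $X_{k}^{-1}$ staying in the algebra by Cayley--Hamilton) correctly reduces the equation to $X_{+}(I_{n}-X_{+})=A^{*}A$ and hence to $\lambda(A^{*}A)\subseteq[0,1/4]$; and completing the square with $X_{+}\geq\tfrac{1}{2}I_{n}$ pins down the larger root. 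Three small points worth tightening: (i) the two inductions (monotone decrease and the lower bound) must be run simultaneously, since the step $X_{k}\leq X_{k-1}\Rightarrow X_{k}^{-1}\geq X_{k-1}^{-1}$ already presupposes $X_{k}>0$; (ii) the claim that $\tfrac{1}{2}\bigl(I_{n}-(I_{n}-4A^{*}A)^{1/2}\bigr)$ is ``therefore'' a fixed point silently uses that $(I_{n}-4A^{*}A)^{1/2}$ commutes with $A$, which is true because $A$ is normal but should be said; the subsequent verification of the Lemma \ref{lm8} characterization of $X_{-}$ on the spectrum of $A$ (where one checks $\tfrac{1}{2}(1-\sqrt{1-4|d|^{2}})\leq|d|$ for each eigenvalue $d\neq0$) is the right way to certify minimality. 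Finally, note that your formulas carry $A^{*}A$ where the paper writes $A^{\mathrm{T}}A$; for equation (\ref{EqNonlinear}) with complex $A$ yours is the correct form, and the two agree in the paper only because the lemma is applied to the real matrix $A^{\lozenge}$.
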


\subsection*{B: Proof of Lemma \ref{lm0}}

\textit{Proof of Item 1:} These equalities can be verified directly
by definition.

\textit{Proof of Item 2}. This result follows from Lemma 9 in
\cite{zld11laa}.

\textit{Proof of Item 3:} We need only to show that $A$ is a normal
matrix if and only if $A^{\heartsuit}$ is a real normal matrix since
unitary matrix is a special case of normal matrix. If
$A^{\heartsuit}$ is a real normal matrix,
then $\left(  A^{\heartsuit}\right)  ^{\mathrm{T}}A^{\heartsuit}%
=A^{\heartsuit}\left(  A^{\heartsuit}\right)  ^{\mathrm{T}}.$
However, $A^{\heartsuit}\left(  A^{\heartsuit}\right)
^{\mathrm{T}}=\left( AA^{*}\right)  ^{\heartsuit}$ and $\left(
A^{\heartsuit}\right) ^{\mathrm{T}}A^{\heartsuit}=\left(
A^{*}A\right)  ^{\heartsuit}.$ Hence we have $\left( AA^{*}\right)
^{\heartsuit}=\left(
A^{*}A\right)  ^{\heartsuit}$ and consequently $AA^{*%
}=A^{*}A,$ that is, $A$ is a normal matrix. The converse can be
shown similarly.

\textit{Proof of Item 4:} From item 2 of this lemma, we obtain%
\begin{align*}
\rho \left(  A^{\heartsuit}\right)   &  =\rho \left(  P_{n}\left[
\begin{array}
[c]{cc}%
A & 0\\
0 & \overline{A}%
\end{array}
\right]  P_{n}^{*}\right) \\
&  =\rho \left(  \left[
\begin{array}
[c]{cc}%
A & 0\\
0 & \overline{A}%
\end{array}
\right]  \right) \\
&  =\max \left \{  \rho \left(  A\right)  ,\rho \left(
\overline{A}\right)
\right \} \\
&  =\rho \left(  A\right)  .
\end{align*}
Similarly, we can compute%
\begin{align*}
\rho \left(  A^{\lozenge}\right)   &  =\rho \left(
E_{n}A^{\heartsuit}\right) =\rho \left(  E_{n}P_{n}\left[
\begin{array}
[c]{cc}%
A & 0\\
0 & \overline{A}%
\end{array}
\right]  P_{n}^{*}\right) \\
&  =\rho \left(  P_{n}^{*}E_{n}P_{n}\left[
\begin{array}
[c]{cc}%
A & 0\\
0 & \overline{A}%
\end{array}
\right]  P_{n}^{*}P_{n}\right) \\
&  =\rho \left(  E_{n}\left[
\begin{array}
[c]{cc}%
A & 0\\
0 & \overline{A}%
\end{array}
\right]  \right)  =\rho \left(  \left[
\begin{array}
[c]{cc}%
0 & \overline{A}\\
A & 0
\end{array}
\right]  \right) \\
&  =\rho^{\frac{1}{2}}\left(  \left[
\begin{array}
[c]{cc}%
0 & \overline{A}\\
A & 0
\end{array}
\right]  \left[
\begin{array}
[c]{cc}%
0 & \overline{A}\\
A & 0
\end{array}
\right]  \right) \\
&  =\rho^{\frac{1}{2}}\left(  \left[
\begin{array}
[c]{cc}%
\overline{A}A & 0\\
0 & A\overline{A}%
\end{array}
\right]  \right)  =\rho^{\frac{1}{2}}\left(  \overline{A}A\right)  .
\end{align*}

\textit{Proof of Item 5: }By definition, we can compute%
\begin{align*}
\left(  A^{\lozenge}\right)  ^{\mathrm{T}}A^{\lozenge}&=\left(  E_{n}%
A^{\heartsuit}\right)  ^{\mathrm{T}}E_{n}A^{\heartsuit}=\left(
A^{\heartsuit }\right)
^{\mathrm{T}}E_{n}^{\mathrm{T}}E_{n}A^{\heartsuit}\\
&=\left(
A^{\heartsuit}\right)  ^{\mathrm{T}}A^{\heartsuit}=\left(  A^{*%
}\right)  ^{\heartsuit}A^{\heartsuit}=\left(  A^{*}A\right)
^{\heartsuit},
\end{align*}
and similarly,%
\begin{align*} A^{\lozenge}\left(  A^{\lozenge}\right)
^{\mathrm{T}}&=E_{n}A^{\heartsuit }\left( E_{n}A^{\heartsuit}\right)
^{\mathrm{T}}=E_{n}A^{\heartsuit}\left( A^{\heartsuit}\right)
^{\mathrm{T}}E_{n}^{\mathrm{T}}\\
&=E_{n}A^{\heartsuit }\left(
A^{*}\right)  ^{\heartsuit}E_{n}^{\mathrm{T}}=E_{n}\left(
AA^{*}\right)  ^{\heartsuit}E_{n}^{\mathrm{T}}=\left( \overline
{AA^{*}}\right)  ^{\heartsuit}.
\end{align*}
Clearly, $A^{\lozenge}$ is a normal matrix if and only if $\left(
A^{\lozenge}\right)  ^{\mathrm{T}}A^{\lozenge}=A^{\lozenge}\left(
A^{\lozenge}\right)  ^{\mathrm{T}},$ which is equivalent to $\left(
\overline{AA^{*}}\right)  ^{\heartsuit}=\left(  A^{*%
}A\right)  ^{\heartsuit}$, namely, $A^{*}A=\overline{AA^{*}%
}.$

\textit{Proof of Item 6:} By using item 1 of this lemma, we obtain%
\[
\left \Vert A^{\lozenge}\right \Vert =\left \Vert
E_{n}A^{\heartsuit}\right \Vert =\left \Vert A^{\heartsuit}\right
\Vert =\left \Vert P_{n}\left[
\begin{array}
[c]{cc}%
A & 0\\
0 & \overline{A}%
\end{array}
\right]  P_{m}^{*}\right \Vert =\left \Vert \left[
\begin{array}
[c]{cc}%
A & 0\\
0 & \overline{A}%
\end{array}
\right]  \right \Vert =\left \Vert A\right \Vert .
\]

\textit{Proof of Item 7}:\ Let $U$ be a unitary matrix such that
$A=UDU^{*}$ where $D$ is a real diagonal positive semi-definite
matrix. Then%
\begin{align*}
\left(  A^{\heartsuit}\right)  ^{\frac{1}{2}}  &  =\left(  \left(
UDU^{*}\right)  ^{\heartsuit}\right)  ^{\frac{1}{2}}=\left(
U^{\heartsuit}\left[
\begin{array}
[c]{cc}%
D & 0\\
0 & D
\end{array}
\right]  \left(  U^{\heartsuit}\right)  ^{\mathrm{T}}\right)  ^{\frac{1}{2}}\\
&  =U^{\heartsuit}\left[
\begin{array}
[c]{cc}%
D^{\frac{1}{2}} & 0\\
0 & D^{\frac{1}{2}}%
\end{array}
\right]  \left(  U^{\heartsuit}\right)  ^{\mathrm{T}}=\left(  UD^{\frac{1}{2}%
}U^{*}\right)  ^{\heartsuit}=\left(  A^{\frac{1}{2}}\right)
^{\heartsuit}.
\end{align*}

\subsection*{C: Proof of Lemma \ref{lm2}}

We only show the case \textquotedblleft$\leq$\textquotedblright.
According to the results in \cite{bhrh87ctmt}, we know that

(1). for $\lambda \geq0,\lambda \in \mathrm{co}\lambda \left(
A\right) \Leftrightarrow \lambda^{2}\in \lambda \left(
A\overline{A}\right)  .$

(2). for $\lambda<0,\lambda \in \mathrm{co}\lambda \left(  A\right)
\Leftrightarrow \lambda \in \lambda \left(  A\overline{A}\right)  .$

(3). for $\operatorname{Im}\left(  \lambda \right)  \neq0,\lambda
\in \mathrm{co}\lambda \left(  A\right)  \Leftrightarrow
\lambda,\overline{\lambda }\in \lambda \left(  A\overline{A}\right)
.$

Let $s$ be an arbitrary eigenvalue of $A\overline{A}.$ Then $\left
\vert s\right \vert \leq1.$ Consider three cases. Case 1: $1\geq
s\geq0.$ Then it follows that $\lambda=\sqrt{s}\in
\mathrm{co}\lambda \left(  A\right)  $ and hence $\left \vert
\lambda \right \vert \leq1.$ Case 2: $-1\leq s<0.$ Then we see that
$\lambda=s\in \mathrm{co}\lambda \left(  A\right)  $ and hence
$\left \vert \lambda \right \vert \leq1.$ Case 3:
$\operatorname{Im}\left(  \lambda \right)
\neq0.$ In this case, we see that either $\lambda=s\in \mathrm{co}%
\lambda \left(  A\right)  $ or $\lambda=\overline{s}\in \mathrm{co}%
\lambda \left(  A\right)  .$ In both cases, there holds $\left \vert
\lambda \right \vert =\left \vert s\right \vert \leq1.$ The proof is
completed.

\end{document}